\documentclass[english,11pt]{amsart}

\usepackage[english]{babel}
\usepackage[T1]{fontenc}
\usepackage{lmodern}

\usepackage{MnSymbol}
\usepackage[centering]{geometry}

\theoremstyle{plain}
\newtheorem{theo}{Theorem}
\newtheorem{lemm}[theo]{Lemma}
\newtheorem{prop}[theo]{Proposition}
\newtheorem{coro}[theo]{Corollary}

\theoremstyle{definition}

\theoremstyle{remark}
\newtheorem{rema}[theo]{Remark}

\newtheorem{theoA}{Theorem}

\usepackage{microtype}
\usepackage{graphicx}
\usepackage{bm}
\usepackage{mathtools}
\usepackage[dvipsnames]{xcolor}
\definecolor{FlatRed}{RGB}{231,76,60}
\definecolor{FlatGreen}{RGB}{46,204,113}
\definecolor{FlatBlue}{RGB}{52,152,219}
\definecolor{FlatYellow}{RGB}{241,196,15}
\colorlet{FlatViolet}{FlatRed!50!FlatBlue}
\colorlet{FlatBrown}{FlatRed!50!FlatGreen}
\colorlet{FlatOrange}{FlatRed!50!FlatYellow}
\colorlet{FlatCyan}{FlatGreen!50!FlatBlue}

\usepackage{enumitem}
\usepackage{hyperref}
\hypersetup{
    colorlinks,
    linkcolor={FlatRed},
    citecolor={FlatGreen},
    urlcolor={FlatBlue}
}

\title[General convex $H^{2|2}$ monotonicity theorem]{Supersymmetric integration by parts and multivariate convex monotonicity for the $H^{2|2}$ model}

\author{Yichao Huang}
\address{Beijing Institute of Technology, School of Mathematics and Statistics, Beijing, China}
\email{yichao.huang@outlook.com}

\author{Xiaolin Zeng}
\address{Institut de Recherche Mathématique Avancée, Université de Strasbourg, Strasbourg, France}
\email{zeng@math.unistra.fr}

\begin{document}

\begin{abstract}
  We use supersymmetric localization and integration by parts to derive variational and convex correlation inequalities in statistical physics. As a primary application, we give an alternative proof of the monotonicity theorem for the $H^{2|2}$ supersymmetric hyperbolic sigma model. This extends a result of Poudevigne to a stronger multivariate convex comparison inequality, without relying on probabilistic couplings.
\end{abstract}

\keywords{Supersymmetric localization; Convexity inequalities; Horospherical coordinates; Supersymmetric integration by parts}

\maketitle

\section{Introduction}
We extend the method of supersymmetric localization to establish continuous correlation inequalities in probability and statistical physics. As a pedagogical warm-up, we first revisit classical Gaussian comparison inequalities, then we apply this method to the $H^{2|2}$ supersymmetric hyperbolic sigma model introduced by Zirnbauer~\cite{MR1134935}. In particular, we revisit the monotonicity theorem for the $H^{2|2}$ model, originally established by Poudevigne~\cite{MR4721024} via discrete probabilistic couplings. Using the new supersymmetric localization approach, we obtain a stronger multivariate convex comparison inequality with a Kahane--Slepian type condition for the $H^{2|2}$ model, see Theorem~\ref{th:better_intro}.

Historically, localization formulas in supersymmetry have taken various forms in both the physics and mathematics communities~\cite{MR958805,MR1185834,MR2045303,MR2928219,MR674406,MR685019,MR721448}. These formulas have broad applications in mathematical physics; in supersymmetric quantum field theories, for example, supersymmetric reductions via localization formulas yield exact solutions~\cite{MR3718870}. Another important class of applications for supersymmetric localization involves Morse-type dimension-comparison inequalities in geometry and topology~\cite{MR683171}.

We will introduce the precise supersymmetric definition of the $H^{2|2}$ model later in the paper: we first recall the $H^{2|2}$ monotonicity theorem in Poudevigne's formulation~\cite{MR4721024} as a property of real-valued integrals, without using the supersymmetric language.
\begin{theoA}[Monotonicity theorem for the $H^{2|2}$ model]\label{th:main}
Fix an integer $N\geq 1$, consider the vertex set $V=\{1,\dots,N,\delta\}$ with a root vertex $\delta$, unoriented edges $(ij)=(ji)$ for $i,j\in V$ and $i\neq j$. Assign symmetric positive edge weights $W_{ij}=W_{ji}>0$ for $i,j\in V$ and $i\neq j$. Define
\begin{equation}\label{eq:det_matrixtree}
    \mathcal{D}_W(t_1,\dots,t_N)=\sum_{T\in\mathcal{T}}\prod_{(ij)\in T}W_{ij}e^{t_i+t_j}
\end{equation}
for $t=(t_1,\dots,t_N)$ and $t_{\delta}=0$, where the sum is over the spanning trees $\mathcal{T}$ of the complete graph on $\{1,\dots,N,\delta\}$. Consider the effective $H^{2|2}$ action functional
\begin{equation*}
    S^W_{\text{eff}}(t)=\frac{1}{2}\sum_{i,j\in V}W_{ij}(\cosh(t_i-t_j)-1)-\frac{1}{2}\left(\ln \mathcal{D}_{W}(t_1,\dots,t_N)-2\sum_{i=1}^{N}t_i\right).
\end{equation*}
Then for fixed $p_1,\dots,p_N\geq 0$, the integral
\begin{equation*}
    K=(2\pi)^{-\frac{N}{2}}\int_{\mathbb{R}^{N}}f\left(\sum_{i=1}^{N}p_ie^{t_i}\right)e^{-S^W_{\text{eff}}(t)}dt_1\cdots dt_{N}
\end{equation*}
is non-increasing in each of the $(W_{ij})_{i,j\in\{1,\dots,N,\delta\}}$ if $f:\mathbb{R}\to\mathbb{R}$ is convex.
\end{theoA}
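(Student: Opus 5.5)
The plan is to differentiate $K$ in a single weight $W_{ij}$ and show that the derivative is $-\tfrac12$ times the expectation of $f''$ against a nonnegative weight, in the spirit of the Gaussian Kahane--Slepian argument used in the warm-up. By a standard mollification I will assume $f$ is smooth with controlled growth. Then
\begin{equation*}
\partial_{W_{ij}}K=-(2\pi)^{-\frac N2}\int f\Bigl(\sum_k p_ke^{t_k}\Bigr)\,\partial_{W_{ij}}S^W_{\text{eff}}(t)\,e^{-S^W_{\text{eff}}}dt .
\end{equation*}
The difficulty is that $\partial_{W_{ij}}S^W_{\text{eff}}$ contains the term $\partial_{W_{ij}}\ln\mathcal D_W$, which has no obvious sign. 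I will therefore lift $K$ to the full $H^{2|2}$ integral.

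\textbf{Step 1: lift to the supersymmetric integral.} I introduce horospherical coordinates $(t_k,s_k,\bar\psi_k,\psi_k)$. In these coordinates
\begin{equation*}
x_k=\sinh t_k-e^{t_k}\bigl(\tfrac12 s_k^2+\bar\psi_k\psi_k\bigr),\quad y_k=e^{t_k}s_k,\quad z_k=\cosh t_k+e^{t_k}\bigl(\tfrac12 s_k^2+\bar\psi_k\psi_k\bigr),
\end{equation*}
together with $\xi_k=e^{t_k}\bar\psi_k$ and $\eta_k=e^{t_k}\psi_k$, and the root is fixed at $u_\delta=(0,0,1,0,0)$. Integrating out $s$ and the fermions (the matrix-tree theorem produces $\mathcal D_W$) recovers $e^{-S^W_{\text{eff}}}$. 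This gives $K=\langle F(u)\rangle_W$, where
\begin{equation*}
F(u)=f(a\cdot u),\qquad a\cdot u:=\sum_k p_k(z_k-x_k)=\sum_kp_ke^{t_k}.
\end{equation*}
The functional $a\cdot u$ is linear in $u$ along the null direction $n=(-1,0,1,0,0)$, so $n\cdot n=0$. In the supersymmetric action the weight $W_{ij}$ multiplies exactly $-(u_i\cdot u_j+1)=\tfrac12(u_i-u_j)^2\ge 0$ in the body sense. Hence
\begin{equation*}
\partial_{W_{ij}}K=-\tfrac12\bigl\langle (u_i-u_j)^2\,F\bigr\rangle_W .
\end{equation*}
Here $(u_i-u_j)^2$ consists of the bosonic terms $(x_i-x_j)^2+(y_i-y_j)^2-(z_i-z_j)^2$ and the fermionic term $2(\xi_i-\xi_j)(\eta_i-\eta_j)$.

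\textbf{Step 2: supersymmetric integration by parts (the main step).} I use the $Q$-invariance of the $H^{2|2}$ measure: for any $G$, $\langle QG\rangle_W=0$, and $Q$-closed observables localize. I write $(u_i-u_j)^2$ as a $Q$-exact piece plus a localizing remainder. Concretely, take $G=(\text{something linear in }u_i-u_j)\cdot F$, so that $QG$ reproduces the quadratic form up to terms in which $Q$ hits $F$. Since $Q(a\cdot u)$ is fermionic and linear in $n$, hitting $F$ once gives $F'(a\cdot u)\,Q(a\cdot u)$. A second contraction, forced by the Berezin integral, gives $F''(a\cdot u)$ multiplied by the square of the $n$-component of $u_i-u_j$. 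Because $n$ is null, the cross terms vanish, and the remaining contribution $\langle (u_i-u_j)^2\rangle$ without derivatives of $F$ localizes to $0$ by SUSY (the body of $Q$-exact constants integrates to zero, $\langle 1\rangle_W=1$ for all $W$). The target identity is
\begin{equation*}
\partial_{W_{ij}}K=-c\,\Bigl\langle f''\Bigl(\sum_kp_ke^{t_k}\Bigr)\,\Phi_{ij}(t)\Bigr\rangle_{\text{eff}},\qquad c>0,
\end{equation*}
with $\Phi_{ij}\ge 0$. I expect $\Phi_{ij}$ to be of the form $(p_ie^{t_i}-p_je^{t_j})^2$ times a positive factor coming from the $s$-integration, or a variance-type expression in $t$. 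Convexity of $f$ then gives $\partial_{W_{ij}}K\le 0$.

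\textbf{Main obstacle and fallback.} The hardest part is organizing the integration by parts so that the fermionic term in $(u_i-u_j)^2$ cancels exactly against the $Q$-derivative of the bosonic part. Only then are the first-order $F'$ terms killed and a manifestly nonnegative coefficient of $F''$ is left. I would first verify this in the Gaussian warm-up analogue, where the statement reduces to Kahane's formula. I would then redo the same computation in horospherical coordinates, where $Q$ acts simply on $(t,s,\bar\psi,\psi)$. If a sign-indefinite $F'$ term survives, I would try to absorb it by adding a multiple of $\langle Q(\cdot)\rangle=0$ built from $a\cdot u$ itself, exploiting $n\cdot n=0$. Once the single-edge inequality is in hand, the multivariate convex version stated in Theorem~\ref{th:better_intro} follows by interpolating linearly in $W$ along any path.
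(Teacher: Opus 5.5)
\textbf{There is a genuine gap: Step~2 never produces a signed identity, and for bulk weights the required idea is missing.} Your Step~1 matches the paper's starting point. Step~2, however, describes the outcome you hope for rather than giving an argument. No $Q$-primitive is written down, and the mechanism you invoke is not what produces a sign. For odd $\Lambda$ one has $\langle Q(\Lambda)F\rangle=\langle \Lambda\,Q(F)\rangle$, so there is no derivative-free remainder to ``localize to zero'', and the nullness of $n$ plays no role. You yourself expect sign-indefinite $F'$ terms.

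For a boundary weight $W_{1\delta}$, the argument that works needs four concrete ingredients:
\begin{enumerate}
\item the primitive $1-z_1=-Q\bigl(\lambda_1/(1+z_1)\bigr)$, with $\lambda_1=x_1\eta_1-y_1\xi_1$;
\item a second integration by parts (write $\eta_1=Q(y_1)$, then $x_1=Q(\eta_1)$, and discard Grassmann-unbalanced terms; this is Lemma~\ref{lemm:Switching}), turning $\sum_k\langle\partial_kF\,\xi_kx_1\eta_1/(1+z_1)\rangle$ into $\sum_{k,\ell}\langle\partial_{k\ell}F\,\xi_\ell y_k\eta_1y_1/(1+z_1)\rangle$;
\item removal of the non-Gaussian factor via $\frac{1}{1+z_1}=\int_0^\infty e^{-2w}e^{-w(z_1-1)}dw$, i.e.\ a shift $W_{1\delta}\mapsto W_{1\delta}+w$;
\item only then, Gaussian integration in horospherical coordinates, where $\mathbb{E}[s_ks_1]\,\mathbb{E}[\bar\psi_\ell\psi_1]=-G_{k1}G_{\ell1}$ with the \emph{same} column of $G=(\Delta^{W,w}_t)^{-1}$ for bosons and fermions.
\end{enumerate}
That coincidence gives $-v^{T}\mathrm{Hess}(F)v$ with $v_k=e^{t_k+t_1}G_{k1}$, and this is the source of the sign. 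For $F=f(\sum_kp_ke^{t_k})$ the weight is $f''\cdot(\sum_kp_ke^{t_k+t_1}G_{k1})^2$ under the shifted measures. This is a non-local Green-function quantity, not $(p_ie^{t_i}-p_je^{t_j})^2$. Also, $e^{t_k}=x_k+z_k$; your $z_k-x_k$ equals $e^{-t_k}+e^{t_k}(s_k^2+2\bar\psi_k\psi_k)$.

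\textbf{Bulk weights.} For a bulk weight $W_{ij}$ the gap is structural. The natural primitive is $-(u_i\cdot u_j+1)=Q\bigl(\tfrac12\lambda_{ij}-\tfrac12(z_i-z_j)\mu_{ij}\bigr)$, with $\lambda_{ij}=(x_i-x_j)(\eta_i-\eta_j)-(y_i-y_j)(\xi_i-\xi_j)$ and $\mu_{ij}=\lambda_i/(1+z_i)-\lambda_j/(1+z_j)$. After two integrations by parts, the $\mu_{ij}$ piece carries the sign-indefinite factor $z_i-z_j$ together with $1/(1+z_i)$ and $1/(1+z_j)$. The result is not of the form $-v^T\mathrm{Hess}(F)v$, and no sign is visible.

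The paper avoids this by rerooting (Lemma~\ref{lemm:rerooting}). With $t'=t-t_1$ one has $e^{t_1}d\nu_\delta(\bm{t})=d\nu_1(\bm{t}')$, and $W_{12}$ becomes a boundary weight. But the observable becomes $e^{t'_\delta}f\bigl(\sum_kp_ke^{t'_k-t'_\delta}\bigr)$, which is no longer of the form $f(\text{linear})$. It is only jointly convex (a perspective function) in $(e^{t'_2},\dots,e^{t'_N},e^{t'_\delta})$. So even the scalar statement for bulk weights requires the boundary case for \emph{jointly convex} $F$, i.e.\ Theorem~\ref{th:better}; the $-v^T\mathrm{Hess}(F)v$ formula delivers this at no extra cost.

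This also shows that your closing remark misreads Theorem~\ref{th:better}. Its multivariate content is joint convexity of the observable, not simultaneous variation of several weights. It is not obtained by interpolating in $W$; it is the input you need.
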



An analogue of this monotonicity theorem is conjectured for the $H^{2|4}$ model, see the discussion after~\cite[Equation~(5.5)]{MR4680390}. To establish monotonicity for the $H^{2|2}$ model, Poudevigne~\cite{MR4721024} developed a probabilistic method. However, this method relies heavily on the specific structure of the $H^{2|2}$ setting; more specifically, three properties limit the broader application of this probabilistic approach. First, the proof uses a graph reduction property, which implies that studying a two-point graph suffices for the $H^{2|2}$ model (see Remark~\ref{rema:ReductionProperty} for a short alternative proof assuming this property). Second, it relies on the triviality of the $H^{2|2}$ partition function, which follows directly from the supersymmetric localization formula~\cite[Proposition~2]{MR2728731}. Third, it utilizes a discrete probabilistic coupling lemma that requires the first two properties, thereby confining the technique to the $H^{2|2}$ setting.

This paper provides an alternative, more general approach to the monotonicity theorem. Poudevigne's approach is motivated by the exact probabilistic representation of the $H^{2|2}$ model, building on the surprising discovery of~\cite{MR3420510} in connection to the Vertex Reinforced Jump Process (VRJP) and Edge Reinforced Random Walk (ERRW), for which he derived the uniqueness of phase transition using the $H^{2|2}$ monotonicity theorem. Our study is based on the original supersymmetric representation of the $H^{2|2}$ model: a continuous supersymmetric variational calculation entirely replaces the discrete probabilistic coupling lemma, and our proof avoids any specific $H^{2|2}$ graph reduction property.

With our new method, we generalize Theorem~\ref{th:main} to a stronger form by enlarging the class of convex observables. More precisely,
\begin{theo}[Generalized form of the $H^{2|2}$ monotonicity theorem]\label{th:better_intro}
Suppose $F: \mathbb{R}^N \to \mathbb{R}$ is smooth and jointly convex (i.e., it has a positive semi-definite Hessian). Then the integral
\begin{equation*}
    J=\int F(e^{t_1},\dots,e^{t_N})d\nu_\delta(\bm{t})
\end{equation*}
is non-increasing in each of the edge weights $W_{ij}$ with $i,j\in\{1,\dots,N,\delta\}$. 
\end{theo}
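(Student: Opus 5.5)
The plan is to differentiate $J$ in a single weight $W_{ij}$ and show that the derivative is minus the expectation of a nonnegative quantity. I will lift the effective measure $d\nu_\delta$ back to the full $H^{2|2}$ superintegral in horospherical coordinates $(t_k,s_k,\bar\psi_k,\psi_k)_{k\in V}$, pinned at the root $\delta$. There the weight $W_{ij}$ enters the action only through $W_{ij}(B_{ij}-1)$, where
\[
B_{ij}-1=\cosh(t_i-t_j)-1+\tfrac12 e^{t_i+t_j}\bigl((s_i-s_j)^2+2(\bar\psi_i-\bar\psi_j)(\psi_i-\psi_j)\bigr).
\]
The observable $F(e^{t_1},\dots,e^{t_N})$ does not depend on $s,\psi$. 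Integrating out the Gaussian $s$ and fermion variables reproduces $e^{-S^W_{\mathrm{eff}}}$, including the $-\tfrac12\ln\mathcal{D}_W$ term through the matrix-tree theorem. The root-normalization is $W$-independent because the partition function is trivial by localization. Hence
\[
\partial_{W_{ij}}J=-\bigl\langle F(e^{t})\,(B_{ij}-1)\bigr\rangle_{W},
\]
where $\langle\cdot\rangle_W$ is the supersymmetric expectation. The task is therefore to show $\langle F(e^t)(B_{ij}-1)\rangle_W\ge0$ for jointly convex $F$.

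\textbf{Step 1: write $B_{ij}-1$ as a $Q$-exact form.} Let $Q$ be the supersymmetry generator. Since $B_{ij}$ is $Q$-closed and $B_{ij}-1$ vanishes at the localization point, the standard computation gives $B_{ij}-1=Q\omega_{ij}$. Here $\omega_{ij}$ is an explicit odd element, essentially $\tfrac12(u_i-u_j)\cdot(\text{fermionic partner of }u_i-u_j)$ in the flat coordinates $u_k=(e^{t_k}s_k,\ e^{t_k},\dots)$.

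\textbf{Step 2: integrate by parts using $Q$.} The action is $Q$-closed, and the supersymmetric integral of a $Q$-exact integrand vanishes (the localization formula invoked earlier in the paper). Together these give
\[
\langle F\,Q\omega_{ij}\rangle=\pm\langle (QF)\,\omega_{ij}\rangle,\qquad QF=\sum_k\partial_kF(e^t)\,Q(e^{t_k}).
\]
Here $Q(e^{t_k})$ is a fermion proportional to $e^{t_k}\psi_k$ (with $\psi_\delta=0$ at the root). Pairing this with the fermion in $\omega_{ij}$ leaves an expression linear in $\partial F$. It is multiplied by odd-degree terms which must be removed by a second supersymmetric or Gaussian integration by parts in the $(s,\psi)$ variables conditionally on $t$. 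That second step produces second derivatives of $F$.

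\textbf{Step 3: identify the Hessian form.} The goal is an identity of the form
\[
\langle F(e^t)(B_{ij}-1)\rangle_W=c\,\Bigl\langle \sum_{k,l}\partial_{kl}F(e^t)\,a_k a_l\Bigr\rangle_W,\qquad c>0,
\]
where $a$ is a random vector built from $(s,t)$. Heuristically $a_k=e^{t_k}(G_{ki}-G_{kj})$-type entries, coming from the covariance of the $s$-field, and the Gaussian integration in $s$ should make the expectation a genuine positive measure average. When $j=\delta$ the $\delta$-components drop out because $e^{t_\delta}=1$ is fixed. Joint convexity then makes the integrand nonnegative pointwise, so $\partial_{W_{ij}}J\le0$.

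\textbf{Main obstacle.} I expect the main obstacle to be Step 2/3: tracking the fermionic terms so that the final expression is a manifestly positive quadratic form in the Hessian, rather than a mixed-sign combination involving $\nabla F$. Concretely, the first-order terms from $QF$ must cancel against the derivative of $\ln\mathcal{D}_W$. My first attempt would be the two-point graph computed by hand, to identify the correct $\omega_{ij}$ and the vector $a$. I would then check the general case by doing the Gaussian $(s,\psi)$ integration explicitly with the Green function $G=(\text{weighted Laplacian})^{-1}$ and applying the Gaussian integration-by-parts formula $\mathbb{E}[s_k g(s)]=\sum_l G_{kl}\,\mathbb{E}[\partial_l g]$.
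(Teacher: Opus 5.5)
Your starting identity $\partial_{W_{ij}}J=-\langle F(e^{\bm{t}})(B_{ij}-1)\rangle_W$ and the plan to write $B_{ij}-1$ as $Q$-exact and integrate by parts match the paper's starting point. However, the proposal stops exactly where the content begins, and the mechanisms you suggest for Steps 1--3 would not work as stated.

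First, a primitive of the kind you describe is not one. Since $Q$ annihilates every $z_k$, the natural candidate $\lambda_{ij}=(x_i-x_j)(\eta_i-\eta_j)-(y_i-y_j)(\xi_i-\xi_j)$ satisfies $Q\lambda_{ij}=2(B_{ij}-1)+(z_i-z_j)^2$. For $j=\delta$ this is $z_i^2-1$, so the actual primitive is $\lambda_i/(1+z_i)$ with $\lambda_i=x_i\eta_i-y_i\xi_i$. It carries the non-Gaussian weight $1/(1+z_i)$.

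Second, conditionally on $\bm{t}$, $F(e^{\bm{t}})$ is constant in $(s,\bar\psi,\psi)$. So no integration by parts in those variables, Gaussian or supersymmetric, can produce $\partial^2F$; the Hessian must come from further $Q$-integrations by parts, which move $t$. In the paper, after the first one you are left with $\sum_k\langle\partial_kF\,\xi_k\,x_1\eta_1/(1+z_1)\rangle$, because the $y_1\xi_1$ half of $\lambda_1$ dies by fermion-degree counting. The switching lemma then applies: $\eta_1=Q(y_1)$ turns $\xi_k\eta_1$ into $-y_ky_1$, and $x_1=Q(\eta_1)$ gives $\sum_{k,\ell}\langle\partial_{k\ell}F\,\xi_\ell y_k\eta_1y_1/(1+z_1)\rangle$. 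No cancellation against $\partial\ln\mathcal{D}_W$ is needed, since that term is already contained in the fermionic part of $B_{ij}-1$.

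Third, the weight $1/(1+z_1)$ blocks the Gaussian evaluation. The paper writes $1/(1+z_1)=\int_0^\infty e^{-2w}e^{-w(z_1-1)}dw$ and absorbs it into the shift $W_{1\delta}\mapsto W_{1\delta}+w$. Only then does Wick's rule in horospherical coordinates give $-v^{T}\mathrm{Hess}F\,v$ with $v_k=e^{t_k+t_1}G^{W,w}_{k1}$. This is averaged over a positive mixture of shifted measures, not $c\langle\cdot\rangle_W$ as you anticipate.

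The bulk case is a separate gap you have not filled. For $i,j\neq\delta$, the root-based $Q$ does not pair the $z$-components. You give neither a primitive of $B_{ij}-1$ nor any argument that the outcome is a sign-definite Hessian form; the vector $a_k\approx e^{t_k}(G_{ki}-G_{kj})$ is only a guess.

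The paper avoids this computation by rerooting. The identity $e^{t_1-t_\delta}d\nu_\delta(\bm{t})=d\nu_1(\bm{t}')$, with $t'_k=t_k-t_1$, rewrites $J=\int e^{t'_\delta}F(e^{\bm{t}'}e^{-t'_\delta})\,d\nu_1(\bm{t}')$. In this form $W_{12}$ is a boundary weight. The observable is the perspective function $T\,F(\bm{x}/T)$ restricted to $x_1=1$, which is jointly convex in $(e^{t'_2},\dots,e^{t'_N},e^{t'_\delta})$, so the boundary case applies verbatim. You need this idea, or a completed direct computation, before your argument covers bulk weights.
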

Therefore, we are able to extend the $H^{2|2}$ monotonicity to more general observables $F(e^{t_1},\dots,e^{t_N})$ with jointly convex functions $F:\mathbb{R}_{>0}^{N}\to\mathbb{R}$. In particular, we do not need to assume $p_1,\dots,p_N>0$ in the linear combination. Moreover, with this stronger form of the $H^{2|2}$ monotonicity theorem, one obtains similarly to the classical Slepian's lemma for Gaussian processes a new class of useful comparison inequalities for the $H^{2|2}$ model, such as
\begin{equation*}
    \mathbb{E}_{\bm{W'}}\left[\max_{1\leq i\leq N}e^{t_i}\right]\leq \mathbb{E}_{\bm{W}}\left[\max_{1\leq i\leq N}e^{t_i}\right]
\end{equation*}
if $\bm{W}\leq\bm{W'}$ by standard convex approximation. This type of consequences is not covered by the weaker formulation of Theorem~\ref{th:main}.

\subsection{Overview of our method}
The core of our method is the supersymmetric localization formula. Readers already familiar with this language may briefly skim through Sections~\ref{sec:sign-conv} and~\ref{sec:h22-1p-case} for our conventions and notation, then proceed directly to Section~\ref{subse:general_h_2_2} for a short, self-contained proof of the more general $H^{2|2}$ monotonicity theorem.

Originally developed for supersymmetric models, supersymmetric localization provides a powerful tool for exact calculations that extends far beyond intrinsically supersymmetric theories, provided that a suitable supersymmetric structure can be introduced. Many calculations in a supersymmetric theory can be exactly reduced to local contributions or semi-classical approximations by localizing onto a lower-dimensional critical manifold.

However, most applications of the supersymmetric localization formula concern exact equalities, and relatively few (to our knowledge, none systematically) deal with inequalities, especially those involving continuous parameters. Our starting point is the striking similarity between Theorem~\ref{th:main} and Kahane's convexity inequality (see~\cite[Section~3]{Huang:2025aa} for the more general multivariate jointly convex form, similar to the upgrade from Theorem~\ref{th:main} to Theorem~\ref{th:better} in this paper) or Slepian's inequality for Gaussian processes. The latter classical comparison inequalities can be proved by showing a monotonicity property under a rotation of the Euclidean coordinate system. Since the supersymmetric generator $Q$ squares to a rotation generator by Cartan's magic formula, it is natural to adapt the Gaussian interpolation method and to use $Q$ to perform (super-)integration by parts to establish convexity or comparison inequalities in supersymmetric statistical physics models. Although a generic supersymmetric integral lacks a deterministic sign, utilizing horospherical coordinates resolves this issue. Switching between coordinates and performing supersymmetric integration by parts in the correct setting results in a deterministic sign for the derivative even after integrating over the Grassmann variables.

Although many calculations in this paper are written using the language of supersymmetric integration, one could in principle translate them into probabilistic real-variable calculations by writing out explicitly all the Berezin determinants. However, the supersymmetric formulation makes the cancellations underlying the argument transparent. A geometric interpretation of Royen's proof of the Gaussian correlation inequality~\cite{Royen2014,Lata_a_2017} is obtained using the method of this paper in~\cite{Huang:2026aa}.

\subsection*{Acknowledgement}
Y.H. is partially supported by the National Key R\&D Program of China No. 2022YFA1006300 and NSFC-12301164, NSFC-12671177. X.Z. acknowledges the support of the Institut de recherche en mathématiques, interactions \& applications: IRMIA++.

\section{A short proof of the \texorpdfstring{$H^{2|2}$}{H2|2} monotonicity theorem}\label{sec:a_short_proof_of_the_h_2_2}

\subsection{Sign convention}\label{sec:sign-conv}
Since we deal with inequalities in this paper, we must first establish our sign conventions for the Berezin supersymmetric calculus \cite{berezin2013introduction}.

We denote bosonic variables by Latin letters $x, y, z, t, s, \dots$, and fermionic (or Grassmann) variables by Greek letters $\xi, \eta, \psi, \bar{\psi}, \dots$. We indicate vectors with boldface letters, such as $\bm{z}$ or $\bm{\xi}$. Fermionic variables anti-commute, meaning $\xi\eta = -\eta\xi$, in particular $\xi^2=0$.

The left fermionic derivative acts as an odd derivation. If $F$ does not depend on $\xi$, the derivative satisfies
\begin{equation*}
    \frac{\partial}{\partial\xi}(\xi F) = F, \quad \frac{\partial}{\partial\xi}F = 0.
\end{equation*}
Analogous rules apply to all other Grassmann variables. The fermionic integral formally matches the fermionic derivative. For any differentiable function $F$, we have
\begin{equation*}
    \int \partial_\xi F(\xi) = \frac{\partial}{\partial\xi}F(\xi).
\end{equation*}

An important consequence is the fermionic Gaussian integral: for any $N\times N$ matrix $\Sigma$,
\begin{equation}\label{eq:Fermionic_GaussianIntegral}
    \int \prod_{i=1}^{N}\partial_{\xi_i}\partial_{\eta_i}e^{-\bm{\xi}^{T}\Sigma\bm{\eta}}=\det(\Sigma),
\end{equation}
where $\bm{\xi}=(\xi_1,\dots,\xi_N)$ and similarly for $\bm{\eta}$.

A distinguished supersymmetric operator $Q$ plays a crucial role in the sequel:
\begin{equation}\label{eq:Q_Operator}
    Q=\xi\frac{\partial}{\partial x}+\eta\frac{\partial}{\partial y}+x\frac{\partial}{\partial\eta}-y\frac{\partial}{\partial\xi}.
\end{equation}
This operator interchanges the bosonic and fermionic variables. It is easily seen that it annihilates the distinguished even variable
\begin{equation}\label{eq:H_Variable}
    H=x^2+y^2+2\xi\eta
\end{equation}
in the sense that $Q(H)=0$, and that $H$ is also $Q$-exact since
\begin{equation}\label{eq:Lambda_Variable}
    H=Q(\lambda),\quad \lambda=x\eta-y\xi.
\end{equation}

Let $f$ and $g$ be homogeneous elements of the Grassmann superalgebra. The Grassmann parity $|f|\in\{0,1\}$ indicates whether $f$ is bosonic ($|f|=0$, an even element) or fermionic ($|f|=1$, an odd element). The operator $Q$ satisfies the (super-)Leibniz rule
\begin{equation*}
    Q(fg)=(Qf)g+(-1)^{|f|}fQ(g).
\end{equation*}

When an even function $F$ takes fermionic arguments, we expand it using a Taylor series. Since fermionic variables are nilpotent, any Taylor series in these variables truncates, meaning $F$ is simply a polynomial in the Grassmann variables.

We also use the supersymmetric localization formula~\cite[Lemma~15]{MR2728731}. For any smooth function $f=f(x,y,\xi,\eta)$  with sufficient decay so that the bosonic boundary integrals vanish, we have
\begin{equation}\label{eq:Q_Localization_Formula}
    \int d\mu\, Q(f)=0,
\end{equation}
where the $Q$-invariant (flat) Berezin form is $d\mu=\frac{1}{2\pi}dxdy\partial_\xi\partial_\eta$. This invariance directly yields a $Q$-integration by parts formula. If $f$ and $g$ are smooth functions of $(x,y,\xi,\eta)$ with sufficient decay so that the bosonic boundary integrals vanish, then
\begin{equation}\label{eq:Q_Integration_by_Parts}
    \int d\mu\,Q(f)g=\int d\mu\,Q(fg)-(-1)^{|f|} \int d\mu\,fQ(g)=-(-1)^{|f|}\int d\mu\, fQ(g).
\end{equation}
The resulting sign depends strictly on the Grassmann parity $|f|$.

\subsection{Slepian-type inequality with supersymmetry}
To demonstrate the method in a familiar setting, we first prove the standard convexity inequality for a one-dimensional Gaussian variable. Let $\mathcal{N}$ be a centered normal random variable with variance $W^{-1}>0$.
\begin{prop}[The simplest Gaussian convex inequality]\label{prop:Slepian_2d}
The integral
\begin{equation*}
    I=\int_{\mathbb{R}}f(x)e^{-\frac{W}{2}x^2}\frac{\sqrt{W}}{\sqrt{2\pi}}dx=\mathbb{E}[f(\mathcal{N})]
\end{equation*}
is non-increasing in $W>0$ if $f:\mathbb{R}\to\mathbb{R}$ is convex.
\end{prop}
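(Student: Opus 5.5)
The plan is to prove Proposition~\ref{prop:Slepian_2d} in the supersymmetric language set up above, as a model for the later arguments. First we lift the one-dimensional integral to the superspace $(x,y,\xi,\eta)$ with the form $d\mu$. The localization formula \eqref{eq:Q_Localization_Formula}, applied to $f=\lambda e^{-\frac{W}{2}H}$, gives $\int d\mu\, e^{-\frac{W}{2}H}=W^{-1}$: the Berezin integral of $e^{-W\xi\eta}$ contributes $W$, and the Gaussian $dxdy$ integral contributes $2\pi/W$ before the factor $\frac{1}{2\pi}$. Rather than working fully supersymmetrically, it is simpler to first treat the classical computation and then rewrite it in terms of $Q$. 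We assume $f$ is smooth with polynomial growth; a general convex $f$ is handled at the end by mollification and monotone convergence.

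The core computation substitutes $x=W^{-1/2}u$, so that $I=\mathbb{E}[f(W^{-1/2}G)]$ with $G$ standard normal. Differentiating in $W$ gives
\begin{equation*}
\partial_W I=-\tfrac12 W^{-3/2}\,\mathbb{E}[G f'(W^{-1/2}G)].
\end{equation*}
Gaussian integration by parts, $\mathbb{E}[G\,h(G)]=\mathbb{E}[h'(G)]$, turns this into
\begin{equation*}
\partial_W I=-\tfrac12 W^{-2}\,\mathbb{E}[f''(W^{-1/2}G)]\le 0
\end{equation*}
by convexity. In the paper's language, this integration by parts step should appear as \eqref{eq:Q_Integration_by_Parts}. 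We write $x f'(x)e^{-\frac W2 x^2}$ using $Q$, so that $\partial_W$ brings down $-\frac12 H=-\frac12 Q(\lambda)$. Moving $Q$ onto the observable $f(x)$ then produces $f'(x)\xi$. Pairing this with $\lambda$ and integrating the fermions yields $f''$ with a definite sign.

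The main obstacle is bookkeeping the signs in the super version, namely the parity of $\lambda$ (odd) and the Berezin orientation. As a cross-check I would run the elementary computation above in parallel. The resulting sign must agree with the classical identity $\partial_W\mathbb{E}[f(\mathcal N)]=-\frac{1}{2W^2}\mathbb{E}[f''(\mathcal N)]$, which is the heat-equation form of the result. Boundary terms vanish thanks to the Gaussian decay together with the polynomial growth of $f$.
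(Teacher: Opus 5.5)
The argument you actually carry out (rescaling $x=W^{-1/2}u$, differentiating, and applying Stein's lemma once) is correct and proves the proposition. It even gives the exact formula $\partial_W I=-\frac{1}{2W^2}\mathbb{E}[f''(\mathcal{N})]$.

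It is a different route from the paper's, which stays in superspace throughout. The paper writes $I=\int d\mu\, f(x)e^{-\frac{W}{2}H}$ and uses $\partial_W e^{-\frac{W}{2}H}=-\frac12 Q(\lambda e^{-\frac{W}{2}H})$. It then performs two $Q$-integrations by parts to reach $-\frac12\int\frac{dxdy}{2\pi}f''(x)\,y^2e^{-\frac{W}{2}(x^2+y^2)}$, where the sign comes from the explicit square $y^2$.

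Your route is shorter, but it hinges on $W$ acting as a pure scale parameter. The paper's route uses only that the weight is $Q$-closed and that $\partial_W$ of the action is $Q$-exact. That structure survives in the $H^{2|2}$ setting, where $z-1=Q(\lambda/(1+z))$ and no scaling is available. It is what produces the $s^2\geq 0$ sign in the one-point case and the $-v^{T}\mathrm{Hess}(F)v$ sign in the boundary variation.

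Your sketch of the supersymmetric translation is wrong in two places, though neither affects your classical proof.

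\emph{Normalization.} We have $\int d\mu\, e^{-\frac{W}{2}H}=1$, not $W^{-1}$. Your own factors give $W\cdot\frac{2\pi}{W}\cdot\frac{1}{2\pi}=1$. Localization applied to $\lambda e^{-\frac{W}{2}H}$ only gives $\int d\mu\, He^{-\frac{W}{2}H}=0$, i.e.\ the integral is independent of $W$. This normalization is exactly why the lift $I=\int d\mu\, f(x)e^{-\frac{W}{2}H}$ carries no $W$-dependent prefactor, so that $\partial_W$ brings down only $-\frac12 H$. With your value you would need a prefactor $W$ and would pick up an extra term with no sign.

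\emph{Number of integrations by parts.} One $Q$-integration by parts followed by fermion integration does not yield $f''$. Indeed $f'(x)\xi\lambda=xf'(x)\xi\eta$, and this integrates to $-\frac{1}{2W}\mathbb{E}[\mathcal{N}f'(\mathcal{N})]$, which is precisely your pre-Stein expression. The supersymmetric counterpart of your Stein step is a second $Q$-primitive: $x\xi\eta=Q(\nu)$ with $\nu=-xy\xi-y^2\eta$. Moving $Q$ onto $f'(x)$ then gives $f''(x)\xi\nu=-y^2f''(x)\xi\eta$. This is where both $f''$ and the definite sign come from.
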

Jensen's inequality yields a one-line proof, but the goal here is to illustrate the implementation of the supersymmetric localization formula~\eqref{eq:Q_Localization_Formula}.

\begin{proof}
Recall the flat Berezin measure $d\mu=\frac{1}{2\pi}dxdy\partial_\xi\partial_\eta$ and the even variable $H$ defined in~\eqref{eq:H_Variable}. We claim that
\begin{equation*}
    I=\int d\mu f(x)e^{-\frac{W}{2}(x^2+y^2+2\xi\eta)}=\int d\mu f(x)e^{-\frac{W}{2}H}.
\end{equation*}

To see this equality, integrating out the Grassmann variables using \eqref{eq:Fermionic_GaussianIntegral} yields a factor of $W$. Integrating out the real variable $y$ yields $\int e^{-\frac{W}{2}y^2} dy = \sqrt{2\pi/W}$. Combining these with the flat measure $\frac{1}{2\pi}dxdy$, the effective measure for $x$ is precisely $\frac{W}{2\pi}\sqrt{\frac{2\pi}{W}}dx=\sqrt{\frac{W}{2\pi}}dx$.

Taking the derivative, we get
\begin{equation*}
    \frac{\partial}{\partial W}I=-\frac{1}{2}\int d\mu f(x)He^{-\frac{W}{2}H}=-\frac{1}{2}\int d\mu f(x)Q(\lambda e^{-\frac{W}{2}H})
\end{equation*}
using that $Q(\lambda)=H$ with $\lambda=x\eta-y\xi$ as in~\eqref{eq:Lambda_Variable} and $Q(H)=0$. Integrating by parts with respect to $Q$ using~\eqref{eq:Q_Integration_by_Parts}, we get
\begin{equation*}
    \frac{1}{2}\int d\mu Q(f(x))\lambda e^{-\frac{W}{2}H}=\frac{1}{2}\int d\mu f'(x)\xi\lambda e^{-\frac{W}{2}H}=\frac{1}{2}\int d\mu f'(x)(x\xi\eta)e^{-\frac{W}{2}H}.
\end{equation*}
Define the fermionic variable $\nu=-xy\xi-y^2\eta$ such that $Q(\nu)=\xi\lambda=x\xi\eta$. Then $Q$-integrating by parts again yields
\begin{equation*}
    \frac{1}{2}\int d\mu f'(x)Q(\nu)e^{-\frac{W}{2}H}=-\frac{1}{2}\int d\mu Q(f'(x))\nu e^{-\frac{W}{2}H}=-\frac{1}{2}\int d\mu f''(x)\xi\nu e^{-\frac{W}{2}H}.
\end{equation*}
But $\xi\nu=-y^2\xi\eta$, and integrating out the Grassmann variables $\xi,\eta$ yields
\begin{equation*}
    \frac{\partial}{\partial W}I=-\frac{1}{2}\int_{\mathbb{R}^2}\frac{dxdy}{2\pi} f''(x)y^2 e^{-\frac{W}{2}(x^2+y^2)}\leq 0,
\end{equation*}
since the last expression is an integration with real variables, $f$ is convex, and $y^2\geq 0$.
\end{proof}

One checks after integrating out $y$ that the derivative is exactly $-\frac{1}{2W^2}\mathbb{E}[f''(\mathcal{N})]$, which is expected via the classical Stein's lemma.\footnote{The statement and proof above are easily generalized to any higher-dimensional Gaussian vectors: one obtains in this manner the so-called dual-type Slepian Gaussian comparison inequalities (dual in the sense that we vary the inverse of the covariance matrix). We omit the general computations.}

\subsection{Supersymmetric proof of the \texorpdfstring{$H^{2|2}$}{H2|2} monotonicity theorem}\label{sec:h22-1p-case}
As a further illustration, we give a short supersymmetric proof of the monotonicity theorem for the $H^{2|2}$ model. The original proof of this result is due to Poudevigne~\cite[Theorem~6]{MR4721024}, who worked directly with real-valued integrals and relied on probabilistic tools such as discrete coupling techniques and Jensen's inequality. Our alternative proof in this section concisely replaces the technical coupling construction of Poudevigne~\cite[Section~3]{MR4721024}.

\subsubsection{Definition and statement}
We recall the definition of the $H^{2|2}$ supersymmetric hyperbolic sigma model introduced by Zirnbauer~\cite{MR1134935} following the presentations of~\cite{MR2728731,MR3420510,MR3904155,MR4218682}. We define the special bosonic variable $z_i=\sqrt{1+x_i^2+y_i^2+2\xi_i\eta_i}$ (defined via its terminating Taylor series) to form the $\mathbb{R}^{3|2}$ spin $v_i=(x_i,y_i,z_i,\xi_i,\eta_i)$ for $i\in V=\{1,\dots,N,\delta\}$, with inner product
\begin{equation*}
    v_{i}\cdot v_{j}=x_ix_j+y_iy_j-z_iz_j+\xi_i\eta_j+\xi_j\eta_i,
\end{equation*}
so that the hyperbolic constraint $v_i\cdot v_i=-1$ is satisfied for all $i\in V$. We also impose the boundary condition $v_{\delta}=(0,0,1,0,0)$, the latter being the base point of the $H^{2|2}$ manifold. The distinguished supersymmetric operator $Q$ is defined by
\begin{equation*}
    Q=\sum_{i=1}^{N}Q_i=\sum_{i=1}^{N}\left(\xi_i\frac{\partial}{\partial x_i}+\eta_i\frac{\partial}{\partial y_i}+x_i\frac{\partial}{\partial\eta_i}-y_i\frac{\partial}{\partial\xi_i}\right).
\end{equation*}

Given symmetric positive edge weights $W_{ij}=W_{ji}>0$ for $i,j\in V$, the $H^{2|2}$ action is
\begin{equation}\label{eq:H22_Action}
    S^{W}=\frac{1}{4}\sum_{i,j\in\{1,\dots,N,\delta\}}W_{ij}(v_i-v_j)^2=-\frac{1}{2}\sum_{i,j\in\{1,\dots,N\}}W_{ij}(v_i\cdot v_j+1)+\sum_{i\in\{1,\dots,N\}}W_{i\delta}(z_i-1).
\end{equation}
As $Q$ annihilates all inner products $v_i\cdot v_j$ and $z_i$, this action is $Q$-closed. Finally, the (hyperbolic) $Q$-invariant Berezin volume form for the $H^{2|2}$ model is
\begin{equation*}
    D\mu=\left(\prod_{l=1}^{N}\frac{1}{z_l}d\mu_l\right)=\left(\prod_{l=1}^{N}\frac{1}{2\pi z_l}dx_ldy_l\partial_{\xi_l}\partial_{\eta_l}\right).
\end{equation*}

The monotonicity theorem for the $H^{2|2}$ model with the formulation of Theorem~\ref{th:main} has the following equivalent description in the above Euclidean $(x,y,z,\xi,\eta)$ coordinates.
\begin{theoA}[Monotonicity theorem for $H^{2|2}$~\cite{MR4721024}]\label{theo:H22_Monotonicity}
Let $p_1,\dots,p_N\geq 0$ be fixed non-negative parameters. For any smooth convex function $f:\mathbb{R}\to\mathbb{R}$, the following integral with the action defined in~\eqref{eq:H22_Action},
\begin{equation*}
    K=\int D\mu f\left(\sum_{k=1}^{N}p_k(x_k+z_k)\right)e^{-S^{W}}
\end{equation*}
is non-increasing in the parameters $W_{ij}$ for any $i,j\in\{1,\dots,N,\delta\}$.
\end{theoA}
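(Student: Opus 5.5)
We follow the template of Proposition~\ref{prop:Slepian_2d}. Differentiating $K$ in a single weight gives two cases:
\begin{equation*}
    \partial_{W_{ij}}K=\tfrac12\int D\mu\, f(\Phi)(v_i\cdot v_j+1)e^{-S^W}\quad(i,j\le N),\qquad \partial_{W_{i\delta}}K=-\int D\mu\, f(\Phi)(z_i-1)e^{-S^W},
\end{equation*}
where $\Phi=\sum_k p_k(x_k+z_k)$. The boundary case is the special case of the bulk case with $v_\delta$ the base point, since $-(v_i\cdot v_\delta+1)=z_i-1$. So it suffices to show
\begin{equation*}
    \int D\mu\, f(\Phi)\,\bigl(-(v_i\cdot v_j+1)\bigr)e^{-S^W}\ge 0 .
\end{equation*}

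\textbf{Step 1 ($Q$-exactness).} We write $-(v_i\cdot v_j+1)=\tfrac12(v_i-v_j)^2=Q(\lambda_{ij})$ for an explicit odd $\lambda_{ij}$. In the flat case this is $\lambda=x\eta-y\xi$. Here the natural choice is $\lambda_{ij}=(x_i-x_j)(\eta_i-\eta_j)-(y_i-y_j)(\xi_i-\xi_j)$ plus a correction from the $z$-components, using $Q(z_i)=0$ and $z_i^2=1+Q(x_i\eta_i-y_i\xi_i)$. Since $e^{-S^W}$ and $D\mu$ are $Q$-closed/invariant, integration by parts via~\eqref{eq:Q_Integration_by_Parts} moves $Q$ onto $f(\Phi)$. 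The constant term produced by the localization formula~\eqref{eq:Q_Localization_Formula} vanishes because the $Q$-exact factor carries no constant. We get
\begin{equation*}
    \int D\mu\, f'(\Phi)\,Q(\Phi)\,\lambda_{ij}\,e^{-S^W},\qquad Q(\Phi)=\sum_k p_k\xi_k .
\end{equation*}

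\textbf{Step 2 (second integration by parts).} As in the Gaussian warm-up, we look for an odd $\nu$ with $Q(\nu)=Q(\Phi)\lambda_{ij}$, possibly modulo terms whose contribution vanishes. Integrating by parts again gives
\begin{equation*}
    -\int D\mu\, f''(\Phi)\,Q(\Phi)\,\nu\,e^{-S^W}.
\end{equation*}
The remaining task is to show that after Berezin integration the bosonic weight multiplying $f''(\Phi)$ is pointwise nonnegative. Then convexity gives the sign.

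\textbf{Step 3 (sign via horospherical coordinates).} A generic Berezin integral has no sign, so we pass to horospherical coordinates $(t_k,s_k,\bar\psi_k,\psi_k)$. In these coordinates $x_k+z_k=e^{t_k}$ and $\Phi=\sum p_ke^{t_k}$ depends only on $t$, and $Q(\Phi)=\sum_k p_k e^{t_k}\bar\psi_k$ up to convention. In these variables the action is quadratic in $(s,\psi,\bar\psi)$ for fixed $t$. Integrating out $s$ and the fermions gives a Gaussian/determinant structure, with $\mathcal{D}_W(t)$ arising via the matrix-tree theorem as in Theorem~\ref{th:main}. The quantity $Q(\Phi)\nu$ should then become $(\sum_k p_ke^{t_k}\bar\psi_k)(\sum_k p_ke^{t_k}\psi_k)\cdot(\text{nonnegative even factor})$, like the $y^2\xi\eta$ in the proof of Proposition~\ref{prop:Slepian_2d}. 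Its fermionic Gaussian expectation is a quadratic form $\langle p e^{t},G(t)\,pe^{t}\rangle$ with $G(t)$ a positive semidefinite Green's function (inverse of the weighted Laplacian). That expectation is then $\ge0$ pointwise in $t$.

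\textbf{Main obstacle.} The delicate point is constructing $\nu$ in Step~2 so that the final integrand has a manifest sign. This is where choosing the coordinates matters. I would first try to do both integrations by parts directly in horospherical coordinates, where $Q$ acts on $t$-functions simply and $\lambda_{ij}$ can be chosen as $(e^{t_i}\ldots)$-weighted differences $(s_i-s_j)(\psi_i-\psi_j)$. I would then verify that the leftover even factor is $e^{t_i+t_j}(s_i-s_j)^2$-like and nonnegative. I would also check boundary terms using the decay of $e^{-S^W}$.
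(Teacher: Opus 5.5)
\textbf{Gap: Steps~2--3 and the bulk edges.} Step~1 is fine, apart from a dropped sign: $\int D\mu\, f(\Phi)Q(\lambda_{ij})e^{-S^W}=-\int D\mu\, f'(\Phi)Q(\Phi)\lambda_{ij}e^{-S^W}$. But Steps~2--3 are where the whole proof lives, and there you only say what \emph{should} happen. Three ingredients are missing.

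(a) You never construct $\nu$, and the paper does not need one. It uses fermionic degree counting: monomials with unequal numbers of $\xi$'s and $\eta$'s integrate to zero. This lets it replace $\lambda_1$ by $x_1\eta_1$. It then integrates by parts factor by factor through $\eta_j=Q(y_j)$ and $x_k=Q(\eta_k)$ (Lemma~\ref{lemm:Switching}), reaching $\sum_{k,\ell}\partial_{k\ell}F\,\xi_\ell y_k\eta_1 y_1/(1+z_1)$.

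(b) The factor $1/(1+z_1)$, forced by $z_1-1=Q(\lambda_1/(1+z_1))$, depends on $s_1$. For $N\ge 2$ it is neither a function of $t$ nor a weight that factors out of the Gaussian integral, and it sits next to $s_ks_1$, which has no sign. The paper removes it with $\frac{1}{1+z_1}=\int_0^\infty e^{-2w}e^{-w(z_1-1)}\,dw$, i.e.\ a shift $W_{1\delta}\mapsto W_{1\delta}+w$. After this the $(s,\bar\psi,\psi)$-integral is exactly Gaussian.

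(c) The structure you predict is the $N=1$ picture and is not what comes out for $N\ge 2$. There $\bar\psi_\ell$ pairs with $\psi_1$ at the endpoint of the varied edge. The sign appears only after Wick's theorem, as $-f''(\Phi)\bigl(\sum_k p_k e^{t_k+t_1}G_{k1}\bigr)^2$ with $G=G^{W,w}_t$. This is a perfect square in the column $G_{\cdot 1}$, not $\langle pe^t,G\,pe^t\rangle$ times a nonnegative factor.

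Your fallback of integrating by parts directly in horospherical coordinates also mixes up two supersymmetries. The $Q$ that moves $\Phi$ (with $Q(t_k)=\bar\psi_k$) acts non-trivially on the Gaussian variables, e.g.\ $Q(s_k)=\psi_k-s_k\bar\psi_k$. The simple internal supersymmetry of the $(s,\bar\psi,\psi)$-Gaussian fixes $t$. It therefore annihilates $f(\Phi)$ and cannot produce the $\cosh(t_i-t_j)-1$ part of the derivative.

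The bulk case is a second, independent gap. Treating the boundary as a special case of the bulk runs in the unhelpful direction. For $i,j\le N$ a valid primitive is $\lambda_{ij}=\frac{z_j\lambda_i}{1+z_i}+\frac{\lambda_j}{1+z_j}-(x_i\eta_j-y_i\xi_j)$. The same two integrations by parts then produce terms containing $z_j/(1+z_i)$ and $1/(1+z_j)$, and mixed pairings such as $\eta_i y_j$. No sign is manifest there, and you give no argument.

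The paper instead reroots at vertex $1$ (Lemma~\ref{lemm:rerooting}), which turns $W_{12}$ into a boundary weight. But then your observable becomes $e^{t'_\delta}f\bigl(\sum_k p_k e^{t'_k-t'_\delta}\bigr)$, which is no longer a convex function of a linear combination of the $e^{t'_k}$. That is why the paper proves the jointly convex Theorem~\ref{th:better}: its class is stable under the perspective map $F\mapsto T\,F(\bm{x}/T)$. Staying inside the class $f\bigl(\sum_k p_k(x_k+z_k)\bigr)$ with no replacement idea, as you do, leaves the bulk edges unproved.
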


To make this statement rigorously well-defined, we recall the (super-)horospherical coordinates~\cite[Section~2.2]{MR2728731}, defined by the change of variables
\begin{equation}\label{eq:HorosphericalCoordinates}
\begin{split}
    &x_i=\sinh(t_i)-\left(\frac{1}{2}s_i^2+\bar{\psi}_i\psi_i\right)e^{t_i},\quad y_i=s_ie^{t_i},\quad z_i=\cosh(t_i)+\left(\frac{1}{2}s_i^2+\bar{\psi}_i\psi_i\right)e^{t_i},\\
    &\xi_i=\bar{\psi}_i e^{t_i},\quad \eta_i=\psi_i e^{t_i}.
\end{split}
\end{equation}
Under this change of variables, the hyperbolic measure $D\mu$ becomes the flat measure $d\mu(t,s,\bar{\psi},\psi)$ weighted by \(\prod_{\ell=1}^{N} e^{-t_{\ell}}\). For any smooth bounded function $F(\bm{v})=F(v_1,\dots,v_N)$, we have the equality
\begin{equation*}
    \int D\mu F(\bm{v})=\int d\mu(t,s,\bar{\psi},\psi) \left(\prod_{l=1}^{N}e^{-t_l}\right)F(\bm{v}),
\end{equation*}
where $d\mu(t,s,\bar{\psi},\psi)$ is the flat Berezin form in the new coordinates.

In these coordinates, conditional on the $t$-components, the action functional decouples and becomes quadratic in the variables $s$ and in $\bar{\psi},\psi$~\cite[Section~2.2]{MR2728731}:
\begin{equation}\label{eq:GaussianAction_Horo}
\begin{split}
    S^{W} &= \frac{1}{2} \sum_{i,j\in \{1,\ldots,N\}}W_{ij}\left(\cosh(t_i-t_j)-1 + \left(\frac{1}{2}(s_i-s_j)^2 +(\bar{\psi}_{i}-\bar{\psi}_{j})(\psi_{i}-\psi_{j})\right) e^{t_i+t_j}\right)\\
    & + \sum_{i\in\{1,\ldots,N\}}W_{i\delta} \left( \cosh t_i-1 + \left( \frac{1}{2}s_i^2 + \bar{\psi}_{i}\psi_{i} \right)e^{t_i}\right).
\end{split}
\end{equation}

The corresponding covariance matrix is the inverse of the $N\times N$ graph Laplacian $(\Delta^{W}_t)$, which is an $M$-matrix defined by
\begin{equation}\label{eq:M-LaplaceMatrix}
(\Delta^{W}_t)_{ij}=
\begin{cases}
    -W_{ij}e^{t_i+t_j}&\quad\text{if}\quad i\neq j,\\
    W_{i\delta}e^{t_i}+\sum_{j\neq i}W_{ij}e^{t_i+t_j}&\quad\text{if}\quad i=j.
\end{cases}
\end{equation}
Therefore, Gaussian integration over the bosonic variables $s$ produces a factor of $(\det \Delta_t^W)^{-1/2}$, while the fermionic Gaussian integration over $\psi, \bar{\psi}$ produces a factor of $\det \Delta_t^W$. The net result is $\sqrt{\det \Delta_t^W}$, which, by the Matrix-Tree Theorem, produces the $\sqrt{\mathcal{D}_W(\bm{t})}$ term. Together with the purely bosonic $\cosh$ terms from $S^W$ and the measure factor $\prod_{\ell=1}^N e^{-t_\ell}$, this exactly recovers the real-variable effective action $S^W_{\text{eff}}(t)$. Thus, integrating out all variables except $\bm{t}$ leaves the real-valued integral
\begin{equation}\label{eq:mixing_measure}
    K=\int_{\mathbb{R}^{N}}f\left(\sum_{k=1}^{N}p_ke^{t_k}\right)d\nu_{\delta}(\bm{t}),
\end{equation}
where $d\nu_{\delta}(\bm{t})=d\nu_{\delta}(t_1,\dots,t_N)$ is a probability measure on $\mathbb{R}^{N}$, called the effective $t$-field of the $H^{2|2}$-model pinned at $\delta$. This coincides with~\cite[Equation~(5)]{MR3729620}, where the effective $t$-field pinned at $i_{0}\in V=\{1,2,\dots, N,\delta\}$ is defined as
\begin{equation}\label{eq:STZ_mixing}
    d\nu_{i_0}(\bm{t})=\mathbf{1}_{\{t_{i_0}=0\}}e^{-\frac{1}{2}\sum_{i,j\in V}W_{ij}(\cosh(t_i-t_j)-1)}\sqrt{\mathcal{D}_W(\bm{t})}\prod_{i\in V, i\neq i_0}\frac{e^{-t_i}dt_i}{\sqrt{2\pi}}.
\end{equation}

This provides a setting in which one can apply a function $f:\mathbb{R}\to\mathbb{R}$ to the sum $\sum_{k}p_k(x_k+z_k)=\sum_{k}p_ke^{t_k}\geq 0$ without supersymmetry, since the latter acquires a probabilistic meaning in the horospherical representation.

\subsubsection{Example: rooted one-point graph}\label{rooted-1p-graph}
Consider the rooted one-point graph with $V=\{1,\delta\}$. We assume $p_1=1$ by scaling and write, with $W=W_{1\delta}$ and $z=z_1$, the variation
\begin{equation*}
    \frac{d}{dW}\int D\mu f(x+z)e^{-W(z-1)}=\int D\mu f(x+z)(1-z)e^{-W(z-1)}=-\int D\mu f(x+z)Q(\tilde{\lambda})e^{-W(z-1)}
\end{equation*}
where $\tilde{\lambda}=\frac{\lambda}{1+z}$ with $\lambda=x\eta-y\xi$, so that $Q(\tilde{\lambda})=z-1$. Integrating by parts in $Q$,
\begin{equation*}
    -\int D\mu f(x+z)Q(\tilde{\lambda})e^{-W(z-1)}=\int D\mu Q(f(x+z))\tilde{\lambda}e^{-W(z-1)}=\int D\mu f'(x+z)(\xi\tilde{\lambda})e^{-W(z-1)}.
\end{equation*}
Notice that $\xi\lambda=Q(\nu)$ where $\nu=-xy\xi-y^2\eta$. Therefore with $\tilde{\nu}=\frac{\nu}{1+z}$ we get
\begin{equation*}
    \int D\mu f'(x+z)Q(\tilde{\nu})e^{-W(z-1)}=-\int D\mu f''(x+z)\xi \tilde{\nu}e^{-W(z-1)}=\int D\mu f''(x+z) \frac{y^2\xi\eta}{1+z}e^{-W(z-1)}.
\end{equation*}
Now we change to the horospherical coordinates~\eqref{eq:HorosphericalCoordinates}. Because the integrand already contains the top-degree Grassmann element $\xi\eta$, any additional fermionic terms will yield a product containing $\xi^2$ or $\eta^2$, which vanishes. Therefore, we can evaluate $z$ strictly at its even ``scalar body'' $\text{bd}(z)=\cosh(t)+\frac{1}{2}s^2e^{t}>0$. Integrating out the fermions then yields
\begin{equation*}
    \int \frac{dtds}{2\pi}\partial_{\bar{\psi}}\partial_\psi\, e^{-t}f''(e^{t})s^2e^{4t}\bar{\psi}\psi\frac{e^{-W(z-1)}}{1+z}=-\int \frac{dtds}{2\pi} e^{-t}f''(e^{t})\frac{s^2e^{4t}}{1+\text{bd}(z)}e^{-W(\text{bd}(z)-1)}\leq 0
\end{equation*}
by the convexity of $f$ and $s^2\geq 0$. Hence, we have proved that
\begin{equation}
  \label{eq-mono-1p-h22-susy}
  \frac{d}{dW}\int D\mu f(x+z)e^{-W(z-1)} \le 0.
\end{equation}
This proves the $H^{2|2}$ monotonicity theorem with $N=1$.

\begin{rema}\label{rema:ReductionProperty}
  This inequality is a direct replacement for the discrete coupling argument of~\cite[Theorem~5]{MR4721024}. Combined with a known graph reduction property for the $H^{2|2}$ model~\cite[Proposition~1.2.1]{MR4721024} (see also~\cite[Lemma~5]{MR3904155} and~\cite{zbMATH07162897}), this already yields an alternative proof of the $H^{2|2}$ monotonicity theorem. See Appendix \ref{short-cut} for more details.
\end{rema}

\subsection{General \texorpdfstring{$H^{2|2}$}{H2|2} monotonicity theorem}\label{subse:general_h_2_2}
In the sequel, we use the new supersymmetric integration by parts method to derive a simple and self-contained proof of a stronger $H^{2|2}$ monotonicity theorem, with a Kahane--Slepian type joint convexity condition. This short section is self-contained and does not rely on any input from~\cite{MR4721024}. For readability, we recall the stronger $H^{2|2}$ monotonicity theorem already introduced in the introduction.

\begin{theo}[Generalized form of the $H^{2|2}$ monotonicity theorem]\label{th:better}
Suppose $F: \mathbb{R}^N \to \mathbb{R}$ is smooth and jointly convex (i.e., it has a positive semi-definite Hessian). Then the integral
\begin{equation*}
    J=\int F(e^{t_1},\dots,e^{t_N})d\nu_\delta(\bm{t})
\end{equation*}
is non-increasing in each of the edge weights $W_{ij}$ with $i,j\in\{1,\dots,N,\delta\}$. 
\end{theo}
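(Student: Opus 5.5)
We lift the statement back to the supersymmetric representation and reproduce the one-point computation of Section~\ref{rooted-1p-graph} with several spins. Set $u_k=x_k+z_k$, which equals $e^{t_k}$ in horospherical coordinates~\eqref{eq:HorosphericalCoordinates}. By the reduction leading to~\eqref{eq:mixing_measure}, we have $J=\int D\mu\,F(u_1,\dots,u_N)e^{-S^W}$. Since $Q$ is an even derivation on even functions, $Q(F(\bm u))=\sum_k \partial_kF(\bm u)\,Q(u_k)=\sum_k\partial_kF\,\xi_k$, because $Q_k(x_k+z_k)=\xi_k$ (recall $Q z_k=0$). Differentiating in $W_{ij}$ brings down $-\partial_{W_{ij}}S^W$. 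By~\eqref{eq:H22_Action}, this equals $\frac12(v_i\cdot v_j+1)=-\frac14(v_i-v_j)^2$ for $i,j\le N$, with the symmetric double count, and it equals $z_i-1$ for the root edge. In both cases it is $Q$-exact. For the root edge we use $z_i-1=Q(\tilde\lambda_i)$ as in the one-point example. For an inner edge, $-(v_i\cdot v_j+1)=Q(\lambda_{ij})$ with an explicit odd $\lambda_{ij}$. One natural choice is to write $-(v_i\cdot v_j+1)=\frac12(v_i-v_j)^2$ in horospherical coordinates, namely $\cosh(t_i-t_j)-1+(\frac12(s_i-s_j)^2+(\bar\psi_i-\bar\psi_j)(\psi_i-\psi_j))e^{t_i+t_j}$, and then find a primitive. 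It is convenient to note that $Q$ in horospherical coordinates acts on $(s,\bar\psi,\psi)$ like the flat $Q$ of Section~\ref{sec:sign-conv}, up to factors $e^{t}$, while annihilating the $t$'s. Alternatively, a Lorentz-boost reduction puts $v_j$ at the base point and reduces to the root case.

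The plan is then to integrate by parts twice, using~\eqref{eq:Q_Integration_by_Parts} and the $Q$-invariance of $D\mu$ and $e^{-S^W}$. The first integration by parts produces $\sum_k\partial_kF\,\xi_k\lambda_{ij}$, and the sign bookkeeping is as in Proposition~\ref{prop:Slepian_2d}. We then write $\xi_k\lambda_{ij}=Q(\nu_{k})$ with odd primitives $\nu_k$; this is possible because $Q(\xi_k\lambda_{ij})=-\xi_k Q\lambda_{ij}+0=0$ up to the $Q\xi_k$ term, and one checks closedness. The second integration by parts gives
\[
\partial_{W_{ij}}J=-c\int D\mu \sum_{k,l}\partial_k\partial_lF(\bm u)\,\xi_l\,\nu_k\,e^{-S^W},\qquad c>0.
\]
The goal is to show that after passing to horospherical coordinates and integrating out $s,\bar\psi,\psi$, the fermionic matrix $\xi_l\nu_k$ contributes a real kernel of the form $\sum_{k,l}\partial_{kl}F\,a_k a_l$ times a positive weight. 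Here $a_k$ is a real (bosonic) vector, for instance $a_k= e^{t_k}(s_k-s_j\mathbf 1_{\dots})$-type linear combinations, possibly averaged over a Gaussian in $s$. Positive semidefiniteness of the Hessian then gives $\partial_{W_{ij}}J\le0$.

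To get this structure we choose primitives adapted to horospherical coordinates. We have $\xi_k=\bar\psi_ke^{t_k}$. The primitive $\lambda_{ij}$ should be linear in $\bar\psi,\psi$ with coefficients in $(t,s)$, for example $\lambda_{ij}=\frac12 e^{t_i+t_j}(s_i-s_j)(\psi_i-\psi_j)+g(t)\,(\dots)$. The $\cosh(t_i-t_j)-1$ part needs a primitive involving the $\tilde\lambda$-type correction; this is where the horospherical form is less clean, since $t$ is $Q$-closed but $\cosh-1$ is not manifestly exact there. I would therefore first try working in the Euclidean $(x,y,\xi,\eta)$ coordinates, boosting so that $v_j$ is the base point, and using $\tilde\lambda=\lambda/(1+z)$ and $\tilde\nu=\nu/(1+z)$ exactly as in the one-point example. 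In this frame $\xi_k\tilde\lambda$ should become $Q(\nu_k)$ with $\xi_l\nu_k\propto Y_kY_l\,\xi\eta\cdot(\text{positive})$, giving a rank-one positive form $\bm Y\bm Y^T$ in the Hessian pairing. Here $\bm Y$ collects the $y$-components of the transformed spins. The top Grassmann degree then again allows evaluation at scalar bodies.

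\textbf{Main obstacle.} The main obstacle is exactly this step: exhibiting $\nu_k$ so that $\sum_{k,l}\partial_{kl}F\,\xi_l\nu_k$ becomes, after Berezin integration, a manifestly nonnegative quadratic form in $\partial^2F$. In the multivariable case the leftover Grassmann variables at other sites do not automatically saturate to top degree. One therefore has to integrate them out via~\eqref{eq:Fermionic_GaussianIntegral}, which yields cofactor or determinant factors of $\Delta^W_t$. The plan is to control these via the $M$-matrix structure~\eqref{eq:M-LaplaceMatrix} and the matrix-tree theorem, which keeps all such factors nonnegative.
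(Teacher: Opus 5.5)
\textbf{There is a genuine gap: the key positivity step is left open, and the mechanism you propose for it cannot close it.}

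Your first integration by parts is fine, once the sign is corrected: for the root edge, $-\partial_{W_{1\delta}}S^W=1-z_1=-Q(\tilde\lambda_1)$. Once the Hessian appears, what you need is that the real matrix $(a_{k\ell})$ multiplying $\partial_{k\ell}F$, after all Berezin and Gaussian integrations, be positive semidefinite. The $M$-matrix structure and the matrix-tree theorem give only entrywise nonnegativity of cofactors or Green function entries, and that yields no sign. The Hessian of a jointly convex $F$ can have negative off-diagonal entries. For example, $F(u)=(u_1-u_2)^2$ paired with a symmetric $a$ that has zero diagonal and positive off-diagonal entries gives a negative value. Nor does the structure you anticipate arise when $N\ge 2$: you expect $Y_kY_\ell$ times a saturating $\xi\eta$, evaluated at scalar bodies.

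The missing idea is how to force a rank-one kernel. The paper does it as follows for the edge $1\delta$.
\begin{itemize}
\item After the first integration by parts you have $\sum_k\partial_kF\,\xi_k\lambda_1/(1+z_1)$. By fermion-degree counting, $\lambda_1$ may be replaced by $x_1\eta_1$.
\item The switching lemma then applies: write $\eta_1=Q(y_1)$, then $x_1=Q(\eta_1)$, discarding terms of unequal $\xi/\eta$ degree each time. This turns the expression into
\[
\sum_{k,\ell}\int D\mu\,\partial_{k\ell}F(\bm x+\bm z)\,\xi_\ell y_k\,\eta_1 y_1\,\frac{1}{1+z_1}e^{-S^W}.
\]
Here one Hessian index sits on a fermion and the other on a boson, and each is paired with the edge endpoint $1$.
\item Writing $\frac{1}{1+z_1}=\int_0^\infty e^{-2w}e^{-w(z_1-1)}\,dw$ absorbs that factor into the shift $W_{1\delta}\mapsto W_{1\delta}+w$.
\item Wick's theorem in horospherical coordinates then factorizes the integrand as $\mathbb{E}[s_ks_1]\,\mathbb{E}[\bar\psi_\ell\psi_1]=-(G^{W,w}_t)_{k1}(G^{W,w}_t)_{\ell1}$. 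The result is $-v^{T}\mathrm{Hess}(F)\,v$ with $v_k=e^{t_k+t_1}(G^{W,w}_t)_{k1}$.
\end{itemize}
The sign comes from this factorization alone; no positivity of $G$ is used.

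\textbf{Inner edges.} You give neither an explicit $\lambda_{ij}$ nor a usable reduction. A boost sending $v_j$ to the base point depends on the configuration. It rescales $x_k+z_k$ by $e^{-t_j}$ and changes which vertex is pinned, so it neither preserves the observable nor acts as a fixed symmetry of the measure. The paper replaces this by the rerooting identity $e^{t_b-t_\delta}\,d\nu_\delta(\bm t)=d\nu_b(\bm t')$, with $t'_i=t_i-t_b$. Under it, $F(e^{\bm t})$ becomes the perspective function $e^{t'_\delta}F(e^{\bm t'}e^{-t'_\delta})$. Its joint convexity in $(e^{t'_2},\dots,e^{t'_N},e^{t'_\delta})$ is what lets the boundary argument apply to $W_{12}$; you need both ingredients.

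\textbf{Primitives $\nu_k$.} Your justification via $Q$-closedness is not valid. $Q^2$ is a rotation generator, not zero, and $\xi_k\lambda$ is not $Q$-closed: already for $N=1$, $Q(x\xi\eta)=-x^2\xi-xy\eta$. Suitable primitives exist only modulo terms with unequal numbers of $\xi$ and $\eta$, which is exactly what the switching lemma exploits.
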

The smoothness assumption can be dropped by standard approximation arguments. The integral could be non-finite, in which case it evaluates to $+\infty$: the non-increasing property means then that the integral stays infinity when the edge weights decrease. A slightly generalized formulation of Theorem~\ref{th:main} follows at once since for \emph{any real parameters} $p_1,\dots,p_N\in\mathbb{R}$ without any sign condition, the function
\begin{equation*}
    F(e^{t_1},\dots,e^{t_N})=f\left(\sum_{k=1}^{N}p_ke^{t_k}\right)
\end{equation*}
is jointly convex in the variables $e^{t_1},\dots,e^{t_N}$ when $f$ is convex.

\subsection{A useful lemma}
We now state a useful ``switching lemma'' to simplify many of the calculations in the sequel. In what follows, we denote by $\bm{x}+\bm{z}$ the vector $(x_1+z_1,\dots,x_N+z_N)$, and by $\partial_{k} F$ and $\partial_{k\ell} F$ the partial derivatives of $F$.

\begin{lemm}[Switching lemma]\label{lemm:Switching}
Consider a smooth function $F:\mathbb{R}^{N}\to\mathbb{R}$ with sufficient decay and a $Q$-invariant even function $A$. We assume furthermore that every monomial in the expansion of $A$ has equal degree of $\xi$ and $\eta$. For any $i,j,k\in\{1,\dots,N\}$, we have
\begin{equation*}
    \int d\mu\,F(\bm{x}+\bm{z}) x_k\xi_i\eta_j A= -\int d\mu F(\bm{x}+\bm{z}) x_ky_iy_j A.
\end{equation*}
A further $Q$-integration by parts shows that this quantity also equals
\begin{equation*}
    \sum_{\ell=1}^{N}\int d\mu\,\partial_\ell F(\bm{x}+\bm{z}) \xi_\ell\eta_ky_iy_j A.
\end{equation*}
\end{lemm}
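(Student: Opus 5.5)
The plan is to prove both identities by writing the relevant monomials as $Q$-exact terms plus a remainder. We then apply the $Q$-integration by parts formula~\eqref{eq:Q_Integration_by_Parts} with the flat product form $d\mu=\prod_l d\mu_l$, which is $Q$-invariant because~\eqref{eq:Q_Localization_Formula} holds coordinatewise. Each boundary term produced this way should vanish by a \emph{fermionic degree count}.

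The degree count works as follows. The Berezin integral $\int\prod_l\partial_{\xi_l}\partial_{\eta_l}$ only sees the top monomial $\prod_l\xi_l\eta_l$, which has equal $\xi$-degree and $\eta$-degree. Since $z_l=\sqrt{1+x_l^2+y_l^2+2\xi_l\eta_l}$, the expansion of $F(\bm{x}+\bm{z})$ (and of its derivatives $\partial_\ell F(\bm x+\bm z)$) involves only products of $\xi_l\eta_l$, hence is balanced. By hypothesis $A$ is balanced as well. Therefore any integrand whose $\xi$-degree minus $\eta$-degree is nonzero in every monomial integrates to zero.

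For the first identity, we use $Q(\xi_i)=-y_i$ and $Q(y_j)=\eta_j$ to compute
\begin{equation*}
Q(\xi_i y_j)=-y_iy_j-\xi_i\eta_j,\qquad\text{so}\qquad \xi_i\eta_j=-y_iy_j-Q(\xi_iy_j).
\end{equation*}
Set $G=F(\bm x+\bm z)x_kA$, which is even. It then suffices to show $\int d\mu\, G\,Q(\xi_iy_j)=0$. Integrating by parts moves $Q$ onto $G$, up to sign. Using $Q(z_l)=0$, $Q(x_l)=\xi_l$ and $Q(A)=0$, we get
\begin{equation*}
Q(G)=\sum_\ell \partial_\ell F\,\xi_\ell x_kA+F\,\xi_kA.
\end{equation*}
Thus $Q(G)\,\xi_iy_j$ has $\xi$-degree exceeding $\eta$-degree by $2$ in every monomial, and it integrates to zero by the count above. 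This yields the first equality.

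For the second expression, we write $x_k=Q(\eta_k)$ and set $g=F(\bm x+\bm z)y_iy_jA$. Since $\eta_k$ is odd, \eqref{eq:Q_Integration_by_Parts} gives
\begin{equation*}
\int d\mu\, Q(\eta_k)g=\int d\mu\,\eta_kQ(g),
\end{equation*}
where
\begin{equation*}
Q(g)=\sum_\ell\partial_\ell F\,\xi_\ell y_iy_jA+F(\eta_iy_j+y_i\eta_j)A.
\end{equation*}
The terms $\eta_k\eta_iy_j$ and $\eta_ky_i\eta_j$ are unbalanced, with $\eta$ in excess by $2$, so they vanish. What remains is
\begin{equation*}
-\int d\mu\,F x_ky_iy_jA=-\sum_\ell\int d\mu\,\eta_k\,\partial_\ell F\,\xi_\ell y_iy_jA=\sum_\ell\int d\mu\,\partial_\ell F\,\xi_\ell\eta_k y_iy_jA.
\end{equation*}
This is the claimed formula.

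The main point requiring care is sign bookkeeping, both in the super-Leibniz rule and in the parity-dependent sign of~\eqref{eq:Q_Integration_by_Parts}. We also need to justify that the decay of $F$ makes the bosonic boundary terms vanish, so that~\eqref{eq:Q_Localization_Formula} applies to products containing the polynomial factors $x_k$, $y_iy_j$ and $A$.
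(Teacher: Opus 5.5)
Your proposal is correct and follows essentially the same route as the paper: both identities come from a single $Q$-integration by parts (using $\eta_j=Q(y_j)$, which you merely repackage via the Leibniz rule as $\xi_i\eta_j=-y_iy_j-Q(\xi_iy_j)$, and then $x_k=Q(\eta_k)$), with the leftover terms killed by the $\xi$/$\eta$ degree imbalance. Your signs check out, as does your observation that $F(\bm{x}+\bm{z})$ and $\partial_\ell F(\bm{x}+\bm{z})$ are balanced because $z_l$ depends on the fermions only through $\xi_l\eta_l$.
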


\begin{proof}
Recall that the integral with respect to $d\mu$ of a fermionic form $\theta$ having unequal numbers of $\xi$ and $\eta$ components vanishes: $\int d\mu\, \theta = 0.$

Write $\eta_j=Q(y_j)$, and apply the $Q$-integration by parts formula with the odd variable $F(\bm{x}+\bm{z})x_k\xi_i$ and the $Q$-invariant function $A$:
\begin{equation*}
    \int d\mu\,F(\bm{x}+\bm{z}) x_k\xi_iQ(y_j) A= \int d\mu\,Q(F(\bm{x}+\bm{z}) x_k\xi_i)y_j A.
\end{equation*}
Applying the Leibniz rule yields $Q(F(\bm{x}+\bm{z}) x_k\xi_i) = Q(F(\bm{x}+\bm{z})x_k)\xi_i + F(\bm{x}+\bm{z})x_kQ(\xi_i)$. The first term vanishes upon integration because it contains two more $\xi$ components than $\eta$ components (using the equal fermionic degree assumption on $A$). Using $Q(\xi_i)=-y_i$, we obtain
\begin{equation*}
    -\int d\mu\,F(\bm{x}+\bm{z}) x_ky_iy_j A,
\end{equation*}
which proves the first identity.

The second identity follows similarly. Write $x_k=Q(\eta_k)$ and perform $Q$-integration by parts with the $Q$-invariant function $A$:
\begin{equation*}
    -\int d\mu\,F(\bm{x}+\bm{z}) Q(\eta_k)y_iy_j A= -\int d\mu\,\eta_k Q(F(\bm{x}+\bm{z})y_iy_j) A.
\end{equation*}
Expand the argument using the Leibniz rule: $\eta_k Q(F(\bm{x}+\bm{z})y_iy_j) = \eta_k Q(F(\bm{x}+\bm{z}))y_iy_j + \eta_k F(\bm{x}+\bm{z})Q(y_iy_j)$. The second term vanishes upon integration because it carries an excess of two $\eta$ components. We conclude that
\begin{equation*}
    -\int d\mu \,\eta_k Q(F(\bm{x}+\bm{z})y_iy_j) A= \sum_{\ell=1}^{N}\int d\mu \,\partial_\ell F(\bm{x}+\bm{z})\xi_\ell\eta_ky_iy_j A.
\end{equation*}
This completes the proof.
\end{proof}

We now prove Theorem~\ref{th:better}. We start by studying the variation with respect to a boundary edge weight, then the variation in the bulk.

\subsubsection{Boundary variations}\label{subse:boundary_variations}
We first prove the monotonicity with respect to a fixed boundary edge weight, say $W_{1\delta}$.

\begin{proof}[Proof of Theorem~\ref{th:better} in the boundary case]
Without loss of generality, consider the variation with respect to $W_{1\delta}$. With $\lambda_{1}=x_1\eta_1-y_1\xi_1$ as in the previous examples,
\begin{equation*}
\begin{split}
    \frac{\partial}{\partial W_{1\delta}}J&=\int D\mu\,F(\bm{x}+\bm{z})(1-z_1)e^{-S^{W}}\\
    &=\int D\mu\,F(\bm{x}+\bm{z})Q\left(\frac{-\lambda_1}{1+z_1}\right)e^{-S^{W}}\\
    &=\sum_{k=1}^{N}\int D\mu\, \partial_k F(\bm{x}+\bm{z})\xi_k\frac{\lambda_1}{1+z_1}e^{-S^{W}}.
\end{split}
\end{equation*}
At this stage one can replace $\lambda_1$ by $x_1\eta_1$, since the other part of $\lambda_1$ will create an excess of $\xi$ components and vanishes upon integration. Then we apply the switching lemma with the $Q$-invariant function $A = \left(\prod_{\ell=1}^{N} \frac{1}{z_{\ell}}\right) \frac{1}{1+z_{1}} e^{-S^{W}}$:
\begin{equation*}
    \frac{\partial}{\partial W_{1\delta}}J=\sum_{k=1}^{N}\int D\mu\, \partial_k F(\bm{x}+\bm{z})\xi_k\frac{x_1\eta_1}{1+z_1}e^{-S^{W}}=\sum_{k,\ell=1}^{N}\int D\mu\, \partial_{k\ell}F(\bm{x}+\bm{z})\xi_\ell y_k\frac{\eta_1y_1}{1+z_1}e^{-S^{W}}.
\end{equation*}
Using the expansion $\frac{1}{1+z_1}=\int_{0}^{\infty} dw e^{-2w} e^{-w(z_1-1)}$, the above integral can be rewritten as
\begin{equation}\label{eq:beforePassingToHoro}
    \int_{0}^{\infty}dw e^{-2w}\left[\sum_{k,\ell=1}^{N}\int D\mu\, \partial_{k\ell}F(\bm{x}+\bm{z})\xi_\ell y_k\eta_1y_1 e^{-S^{W,w}}\right],
\end{equation}
where the action $S^{W,w}$ is defined with a shifted weight $W_{1\delta}\mapsto W_{1\delta}+w$. It remains to show that for any fixed $w>0$, the following quantity is non-positive:
\begin{equation*}
    \sum_{k,\ell=1}^{N}\int D\mu\, \partial_{k\ell}F(\bm{x}+\bm{z})\xi_\ell y_k\eta_1y_1 e^{-S^{W,w}}.
\end{equation*}
In the horospherical coordinates~\eqref{eq:HorosphericalCoordinates} and $d\mu=\prod_{i=1}^{N}\frac{dt_ids_i\partial_{\bar{\psi}_i}\partial_{\psi_i}}{2\pi}$, the above display becomes
\begin{equation*}
    \sum_{k,\ell=1}^{N}\int d\mu\, e^{-\sum_{i}t_i}\partial_{k\ell}F(e^{\bm{t}})\bar{\psi}_\ell e^{t_\ell}s_ke^{t_k}\psi_{1}e^{t_1}s_1e^{t_1}e^{-S^{W,w}}.
\end{equation*}
Gaussian integration in $s,\bar{\psi},\psi$ using the covariance matrix $G^{W,w}_{t}=(\Delta^{W,w}_t)^{-1}$ recalled in~\eqref{eq:M-LaplaceMatrix} (with the $w$-shift in $W_{1\delta}$ as above) yields, with the effective measure~\eqref{eq:mixing_measure} but shifted by $w$,
\begin{equation*}
\begin{split}
    &\sum_{k,\ell=1}^{N}\int d\nu^{w}_{\delta}(\bm{t})\left[-e^{t_\ell+t_1}(G^{W,w}_t)_{\ell 1}\partial_{k\ell}F(e^{\bm{t}})e^{t_k+t_1}(G^{W,w}_t)_{k1}\right].
\end{split}
\end{equation*}
The minus sign comes from the fermionic expectation $\mathbb{E}[\bar{\psi}_i\psi_j]=-(G^{W,w}_t)_{ij}$. This is manifestly non-positive by the joint convexity of $F$, since the term inside the square brackets takes the form $-v^{T}(\mathrm{Hess}(F))v$.
\end{proof}
The above proof also yields an exact formula for the variation of $J$ with respect to a boundary edge weight. The mechanism behind the generalized $H^{2|2}$ monotonicity property becomes transparent: this variation is the average of expressions of the form $-v^{T}(\mathrm{Hess}(F))v$, so joint convexity is the correct condition to impose in view of this explicit calculation.

\begin{rema}
Notice that the positivity of the entries of the Green matrix $G$ is not used. Therefore, our proof does not rely on the $M$-matrix property of the $H^{2|2}$ model, which was crucial for probabilistic methods such as random path expansion.
\end{rema}

\subsubsection{Bulk variations}
We now prove the monotonicity with respect to the parameter $W_{ij}$ for a fixed pair $i,j\in\{1,\dots,N\}$: without loss of generality, suppose that $(i,j)=(1,2)$. The basic idea is to use a ``rerooting'' formula to shift the pinning condition from the vertex $\delta$ to the vertex $1$. After this rerooting, $W_{12}$ becomes a boundary edge weight and we can directly invoke the previous proof in the boundary case.

\begin{lemm}[Rerooting formula]\label{lemm:rerooting}
Let $\delta,b\in V$ and consider the measures $d\nu_\delta$ and $d\nu_b$ with different pinning points $\delta$ and $b$, defined in~\eqref{eq:mixing_measure}. If the variables $\bm{t}'$ and $\bm{t}$ are related by $t'_i=t_i-t_b$ for all $i\in V$, then
\begin{equation*}
    e^{t_b-t_\delta}d\nu_\delta(\bm{t})=d\nu_b(\bm{t}').
\end{equation*}
\end{lemm}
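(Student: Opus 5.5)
The rerooting formula should follow from a direct change of variables in the explicit density \eqref{eq:STZ_mixing}. Both sides are densities in $N$ real variables: the variables $(t_i)_{i\neq\delta}$ with $t_\delta=0$ on the left, and $(t'_i)_{i\neq b}$ with $t'_b=0$ on the right. The map $t'_i=t_i-t_b$ sends the hyperplane $\{t_\delta=0\}$ bijectively onto $\{t'_b=0\}$, with inverse $t_i=t'_i-t'_\delta$ (since $t'_\delta=-t_b$). I will treat the identity as equality of pushforward measures. Concretely, I will check that the density of $d\nu_\delta$, multiplied by $e^{t_b-t_\delta}$ and rewritten in the $t'$ coordinates, equals the density of $d\nu_b$.

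First, the Lebesgue parts. The linear map $(t_i)_{i\neq\delta}\mapsto(t'_i)_{i\neq b}$ is given by $t'_\delta=-t_b$ and $t'_i=t_i-t_b$ for $i\neq b,\delta$. It is triangular up to a permutation, with determinant of absolute value $1$. So $\prod_{i\neq\delta}dt_i=\prod_{i\neq b}dt'_i$, and the $(2\pi)^{-N/2}$ constants agree. Second, the cosh term $\sum_{i,j}W_{ij}(\cosh(t_i-t_j)-1)$ depends only on differences, so it is unchanged.

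Third, the exponential weights. We have
\[
\sum_{i\neq\delta}t_i=\sum_{i\in V}t_i-t_\delta,\qquad \sum_{i\neq b}t'_i=\sum_{i\in V}t_i-(N+1)t_b .
\]
Fourth, the determinant term. Every spanning tree of the complete graph on $V$ has $N$ edges, and the summand $\prod_{(ij)\in T}W_{ij}e^{t_i+t_j}$ is homogeneous under translation. Replacing $t$ by $t-t_b$ multiplies it by $e^{-2Nt_b}$, so $\mathcal D_W(\bm t')=e^{-2Nt_b}\mathcal D_W(\bm t)$ and
\[
\sqrt{\mathcal D_W(\bm t')}=e^{-Nt_b}\sqrt{\mathcal D_W(\bm t)} .
\]
Here I must be careful that $\mathcal D_W$ is the symmetric spanning-tree polynomial over all of $V$, with the root variable entering only through the pinning $t_\delta=0$. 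Then the same formula applies with either pinning.

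Collecting the factors, the density of $\nu_b$ at $\bm t'$ equals the density of $\nu_\delta$ at $\bm t$ times
\[
e^{-Nt_b}\cdot e^{-\sum_V t_i+(N+1)t_b}\cdot e^{\sum_V t_i-t_\delta}=e^{t_b-t_\delta},
\]
which is the claim. Since $t_\delta=0$, this factor is just $e^{t_b}$.

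The main obstacle is purely bookkeeping: tracking which variable is pinned, the sign and count in the $e^{-t_i}$ products, and the degree $2N$ of homogeneity of $\mathcal D_W$. A sanity check is to take $N=1$, where the identity can be verified by hand.
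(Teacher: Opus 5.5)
Your proposal is correct and follows the paper's proof: you change variables $t'_i=t_i-t_b$, note the unit Jacobian and the invariance of the $\cosh$ terms, and use the degree-$2N$ translation homogeneity of $\mathcal{D}_W$ (as the spanning-tree polynomial over all of $V$), whose square-root scaling $e^{-Nt_b}$ combines with the $\prod e^{-t_i}$ factor to leave exactly $e^{t_b-t_\delta}$. Your explicit bookkeeping is more precise than the paper's one-line remark that the two scalings ``exactly cancel'', since they in fact leave the residual factor $e^{t_b}$, which is precisely the claimed density.
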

\begin{proof}
Recall from~\cite[Equation~(5)]{MR3729620} that
\begin{equation*}
    d\nu_{i_0}(\bm{t})=\mathbf{1}_{\{t_{i_0}=0\}}e^{-\frac{1}{2}\sum_{i,j}W_{ij}(\cosh(t_i-t_j)-1)}\sqrt{\mathcal{D}_W(\bm{t})}\prod_{i\in V, i\neq i_0}\frac{e^{-t_i}dt_i}{\sqrt{2\pi}}.
\end{equation*}
The proof follows by performing the change of variables $t'_i=t_i-t_b$, and noticing that the scaling of $\sqrt{\mathcal{D}_W(\bm{t})}$ defined in~\eqref{eq:det_matrixtree} exactly cancels that of the $\prod e^{-t_i}$ factor in the density.
\end{proof}

Recall that if $F(x_1,\dots,x_N)$ is jointly convex, its perspective function $P(x_1,\dots,x_N,T)=T\cdot F(\frac{x_1}{T},\dots,\frac{x_N}{T})$ is jointly convex in $(x_1,\dots,x_N,T)$ for $T>0$, see Appendix~\ref{subse:perspective_function_of_a_jointly_convex_function}.

\begin{proof}[Proof of Theorem~\ref{th:better} in the bulk case]
Consider the variation with respect to the bulk edge weight $W_{12}$ of the quantity
\begin{equation*}
    J=\int F(e^{\bm{t}})d\nu_\delta(\bm{t}).
\end{equation*}
We reroot the integral from $\delta$ to $1$: by Lemma~\ref{lemm:rerooting} we get
\begin{equation*}
    J=\int e^{t'_{\delta}}F(e^{\bm{t}'}\cdot e^{-t'_\delta})d\nu_1(\bm{t}'),
\end{equation*}
where the new pinning is $t'_1=0$ and the integration is over the variables $t'_{i}$ for $i\in\{2,\dots,N,\delta\}$. Setting $T=e^{t'_\delta}$, we see that the new function
\begin{equation*}
    G(e^{\bm{t}'})=e^{t'_{\delta}}F(e^{\bm{t}'}\cdot e^{-t'_\delta})
\end{equation*}
is jointly convex in the variables $e^{t'_2},\dots,e^{t'_N},e^{t'_{\delta}}$. Since $W_{12}$ becomes a boundary edge weight after the rerooting, the generalized monotonicity theorem in Section~\ref{subse:boundary_variations} applies, and the proof in the bulk case is complete.
\end{proof}

\begin{coro}[Monotone random edge weights]
Suppose that $\{W(\bm{a})\}_{\bm{a}}$ is a family of random edge weights such that when $\bm{a}\leq\bm{a'}$, there is an almost sure coupling with $W_e(\bm{a})\leq W_e(\bm{a'})$ for every edge $e$. Assuming that the integrals are finite for some smooth jointly convex function $F$, then
\begin{equation*}
    \bm{a}\mapsto \mathbb{E}\left[\int F(e^{\bm{t}})d\nu_\delta^{W(\bm{a})}(\bm{t})\right]
\end{equation*}
is coordinate-wise non-increasing.

In particular, this applies to $W_e(a)=w^0_e+T_e(a)$, where $T_e=(T_e(a))_{a\geq 0}$ are subordinators. For Gamma processes this recovers the random-weight comparison used in the Edge Reinforced Random Walk setting (see~\cite[Theorem~1]{MR3420510} and~\cite[Proof of Theorem~3]{MR4721024}).
\end{coro}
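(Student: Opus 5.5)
The plan is to condition on the random weights and apply Theorem~\ref{th:better} pointwise, then take expectations. Fix $\bm{a}\leq\bm{a'}$ and realize $W(\bm{a})$ and $W(\bm{a'})$ on a common probability space through the assumed coupling, so that almost surely $W_e(\bm{a})\leq W_e(\bm{a'})$ for every edge $e$. For a deterministic weight configuration $\bm{W}$, set $\Phi(\bm{W})=\int F(e^{\bm{t}})\,d\nu_\delta^{\bm{W}}(\bm{t})\in(-\infty,+\infty]$.

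The first step is to show that $\Phi$ is non-increasing for the coordinatewise partial order on weight vectors. Theorem~\ref{th:better} says that $\Phi$ is non-increasing in each edge weight separately. To compare $\bm{W}\leq\bm{W}'$, I raise the coordinates one edge at a time along a monotone path $\bm{W}=\bm{W}^{(0)}\leq\bm{W}^{(1)}\leq\dots\leq\bm{W}^{(m)}=\bm{W}'$, where consecutive vectors differ in a single edge. Applying the theorem at each step gives $\Phi(\bm{W}')\leq\Phi(\bm{W})$.

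The step needing care is when $\Phi$ takes the value $+\infty$. The convention stated after Theorem~\ref{th:better} (infinite integrals stay infinite as weights decrease) handles this, since it is equivalent to monotonicity in $(-\infty,+\infty]$. I also need $\Phi(\bm{W})>-\infty$ so that expectations make sense. Because $F$ is convex, it lies above an affine minorant $\ell(\bm{u})=c+\sum_k p_k u_k$. It therefore suffices that $\int e^{t_k}\,d\nu_\delta(\bm{t})<\infty$, which holds since $e^{t_k}=x_k+z_k$ has expectation $1$ by supersymmetric localization (the $H^{2|2}$ partition-function identity, where the observable $x_k+z_k-1$ integrates to zero). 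Hence $\Phi\geq c+\sum_k p_k$, which is bounded below uniformly in $\bm{W}$.

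Finally, a measurability check: $\bm{W}\mapsto\Phi(\bm{W})$ is lower semicontinuous by Fatou's lemma applied to $F-\ell\geq 0$, using continuity of the density in $\bm{W}$. So $\Phi(W(\bm{a}))$ is a random variable, bounded below. The pointwise inequality $\Phi(W(\bm{a'}))\leq\Phi(W(\bm{a}))$ holds almost surely, and taking expectations gives the corollary; the finiteness assumption only ensures that the compared quantities are real numbers.

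For the subordinator example, the only thing to check is the coupling. Subordinators have non-decreasing sample paths, so $W_e(a)=w^0_e+T_e(a)\leq w^0_e+T_e(a')$ almost surely for $a\leq a'$, and the coupling is simply the process itself. With Gamma processes this gives the ERRW random-weight comparison. I expect no real obstacle; the only delicate point is the lower bound ensuring that the expectations are well defined.
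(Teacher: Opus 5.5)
Your proposal is correct and follows the same route as the paper, which simply applies Theorem~\ref{th:better} pathwise under the almost sure monotone coupling and then takes expectations. You also add details the paper leaves implicit, and they are sound: chaining one edge at a time, the $+\infty$ convention, the uniform lower bound from an affine minorant of $F$ together with $\int e^{t_k}\,d\nu_\delta(\bm{t})=1$, and lower semicontinuity of $\bm{W}\mapsto\int F(e^{\bm{t}})\,d\nu_\delta^{\bm{W}}(\bm{t})$ for measurability.
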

\begin{proof}
This follows from integrating the deterministic Theorem~\ref{th:better} pathwise under the almost surely increasing coupling.
\end{proof}

\appendix
\section{Replacing Poudevigne's coupling argument by supersymmetric integration by parts}\label{short-cut}
While Section~\ref{subse:general_h_2_2} gives a self-contained proof of the more general multivariate convex $H^{2|2}$ monotonicity theorem, we demonstrate here how the $1$-point inequality (Section~\ref{rooted-1p-graph}) serves as a direct substitute for Poudevigne's discrete coupling lemma~\cite[Theorem 5]{MR4721024} for the original weaker formulation in Theorem~\ref{th:main}. This result can be combined directly with the existing graph-reduction framework~\cite{MR3729620,MR3904155}.

Equation~\eqref{eq-mono-1p-h22-susy}, written in the $H^{2|2}$ horospherical coordinates~\eqref{eq:HorosphericalCoordinates}, is the statement that for any convex function $f:\mathbb{R}\to\mathbb{R}$,
\begin{equation}\label{eq:2point}
    \frac{d}{dW} \int_{\mathbb{R}} f(e^{t}) e^{-W(\cosh(t)-1)} \sqrt{W e^{t}} \frac{e^{-t}dt}{\sqrt{2\pi}} \le 0.
\end{equation}
We introduce the random Schrödinger representation to use the graph reduction property of the $H^{2|2}$ model. Define the random Schrödinger operator $H_\beta$ on a finite graph $V$ with symmetric positive edge weights $W_{ij}>0$. Set $H_\beta(i,i) = 2\beta_i$ and $H_\beta(i,j) = -W_{ij}$ for $i \neq j$, where the $\beta_i$ are real-valued (retaining the notation of~\cite{MR3729620} despite our convention for Greek letters). \cite[Definition~1]{MR3729620} introduced a probability measure $q_V^{W,1}(d\beta)$; as the off-diagonal entries of $H_{\beta}$ are deterministic, $q_V^{W,1}(d\beta)$ is also the law of the random matrix $H_{\beta}$.

We apply this setup to the two-vertex graph \(V=\{1,\delta\}\) with edge weight \(W = W_{1\delta}\). By~\cite[Theorem~3]{MR3729620}, we can rewrite this horospherical integral~\eqref{eq:2point} as an expectation over a real random vector \(\beta = (\beta_1, \beta_\delta)\). Specifically, following~\cite[Definition~1]{MR3729620}, let
\begin{equation*}
    q^{W,1}_{\{1,\delta\}}(d\beta)=\mathbf{1}_{\{H_\beta>0\}}\frac{2}{\pi}e^{W}\frac{e^{-\beta_1-\beta_\delta}}{\sqrt{\det H_\beta}}d\beta_1 d\beta_\delta.
\end{equation*}
Let $G=H_\beta^{-1}$ denote the Green function. For any constants $p_{1},p_{\delta}\in \mathbb{R}$ and any convex function $g:\mathbb{R}\to\mathbb{R}$, define the convex function $f$ by $f(y)=g(p_{\delta}+p_{1}y)$. In~\cite[Theorem~3]{MR3729620}, the distributional identity $e^{t} = \frac{G(1,\delta)}{G(\delta,\delta)}$ was established, which yields an equivalent reformulation of~\eqref{eq:2point}:
\begin{equation}\label{eq:2pt_monotonicity}
    \frac{d}{dW} \int g\left(p_{\delta}+p_{1} \frac{G(1,\delta)}{G(\delta,\delta)}\right) q^{W,1}_{\{1,\delta\}}(d\beta) \le 0.
\end{equation}

On a general graph with $V=\{1,2,\dots,N,\delta\}$, let $q_{V}^{W,1}(d\beta)$ denote the distribution for the $(N+1) \times (N+1)$ random Schrödinger matrix $H_\beta$. Let $G = H_{\beta}^{-1}$ be its inverse. The explicit density of $H_{\beta}$ was given in~\cite[Definition~1]{MR3729620}, but we only need the following graph reduction property~\cite[Lemma~5]{MR3904155}: 
\begin{prop}\label{prop:reduction}
Partition the vertex set $V$ into two subsets: the bulk $V_1 = \{1, \dots, N-1\}$ and the boundary $V_2 = \{N,\delta\}$.\footnote{The boundary was $\{1,\delta\}$ in Section~\ref{subse:boundary_variations}; we choose $\{N,\delta\}$ here to cleanly perform the block decomposition.} This partition induces a block decomposition of the symmetric matrices $H_\beta$ and $G$:
\begin{equation*}
    H_\beta = \begin{pmatrix} H_\beta^{V_1, V_1} & H_\beta^{V_1, V_2} \\ H_\beta^{V_2, V_1} & H_\beta^{V_2, V_2} \end{pmatrix}, \quad G = \begin{pmatrix} G^{V_1, V_1} & G^{V_1, V_2} \\  G^{V_2, V_1} & G^{V_2, V_2} \end{pmatrix}.
\end{equation*}
Then conditionally on \((\beta_{i})_{i\in V_{1}}\), \((G^{V_2,V_2})^{-1}\) is distributed according to \(q_{\{N,\delta\}}^{\widetilde{W}_{N\delta},1}\), where
\[\widetilde{W}_{N\delta} = W_{N\delta} + \left[H_{\beta}^{V_{2},V_{1}} (H_\beta^{V_1, V_1})^{-1} H_{\beta}^{V_{1},V_{2}}\right]_{N\delta}.\]
In particular, \(\frac{d \widetilde{W}_{N\delta}}{dW_{N\delta}} =1\).
\end{prop}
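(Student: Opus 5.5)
The plan is to prove Proposition~\ref{prop:reduction} by Schur complement block inversion combined with the explicit density $q_V^{W,1}$ of~\cite[Definition~1]{MR3729620}. First, the linear algebra. Block inversion gives
\begin{equation*}
    (G^{V_2,V_2})^{-1}=H_\beta^{V_2,V_2}-H_\beta^{V_2,V_1}(H_\beta^{V_1,V_1})^{-1}H_\beta^{V_1,V_2}=:\check H.
\end{equation*}
The matrix $\check H$ is a symmetric $2\times 2$ matrix. Its off-diagonal entry equals $-W_{N\delta}-[H_\beta^{V_2,V_1}(H_\beta^{V_1,V_1})^{-1}H_\beta^{V_1,V_2}]_{N\delta}$, which is exactly $-\widetilde W_{N\delta}$. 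Its diagonal entries can be written as $2\check\beta_N,2\check\beta_\delta$, where $\check\beta_a=\beta_a-\tfrac12[\cdots]_{aa}$. On the event $H_\beta>0$, the block $H_\beta^{V_1,V_1}$ is a positive definite $M$-matrix, so $(H_\beta^{V_1,V_1})^{-1}$ has nonnegative entries. Since $H_\beta^{V_1,V_2}$ has nonpositive entries, this gives $\widetilde W_{N\delta}\geq W_{N\delta}>0$. The correction term depends only on $(\beta_i)_{i\in V_1}$ and on the edge weights not involving the pair $(N,\delta)$. Hence $d\widetilde W_{N\delta}/dW_{N\delta}=1$.

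Second, the conditional law. I fix $(\beta_i)_{i\in V_1}$ with $H_\beta^{V_1,V_1}>0$ and change variables $(\beta_N,\beta_\delta)\mapsto(\check\beta_N,\check\beta_\delta)$, which is a translation and so has unit Jacobian. By the Schur complement criterion, $\mathbf 1_{\{H_\beta>0\}}=\mathbf 1_{\{H_\beta^{V_1,V_1}>0\}}\mathbf 1_{\{\check H>0\}}$. Also $\det H_\beta=\det H_\beta^{V_1,V_1}\det\check H$. The density of $q_V^{W,1}$ has the form
\begin{equation*}
    \mathbf 1_{\{H_\beta>0\}}\Big(\tfrac{2}{\pi}\Big)^{|V|/2}e^{\frac12\sum_{i,j}W_{ij}-\sum_i\beta_i}(\det H_\beta)^{-1/2}.
\end{equation*}
This is the form with unit boundary terms; I would check it against the two-point formula written just above the proposition. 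I then need to verify that the exponent $-\beta_N-\beta_\delta$ combines with the constant $e^{W_{N\delta}}$ to produce $e^{\widetilde W_{N\delta}-\check\beta_N-\check\beta_\delta}$, times a factor depending only on $\beta_{V_1}$. This reduces to the identity
\begin{equation*}
    \widetilde W_{N\delta}-W_{N\delta}=\tfrac12\big([\cdot]_{NN}+[\cdot]_{\delta\delta}\big)-\ldots,
\end{equation*}
which is false in general: the difference of the two sides is a quadratic form in the couplings. The correct bookkeeping therefore uses the general $q^{W,\eta}$ family with boundary terms $\eta$, as in~\cite{MR3729620}. Marginalizing $V_1$ out of $q_V^{W,1}$ produces $q^{\check W,\check\eta}$ on $V_2$, with $\check\eta_a=1+\sum_{i\in V_1}W_{ai}\cdot(\ldots)$, and these terms are nontrivial.

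\textbf{Main obstacle.} The real obstacle is reconciling the proposition's claim of $\eta\equiv1$ on $V_2$. I would handle it by invoking the restriction/marginal property of~\cite[Lemma~5]{MR3904155} (equivalently~\cite[Lemma~4]{MR3729620}) as stated rather than recomputing. Concretely, I would check that after conditioning, the leftover $\eta$-dependence is absorbed: the $\eta$-terms attach only to the $t$-field pinned at $\delta$, and the statistic $G(N,\delta)/G(\delta,\delta)$ used downstream is insensitive to it. If needed, I would restate the conclusion with $q^{\widetilde W,\check\eta}$, since the one-point inequality~\eqref{eq:2point} extends to that family by the same supersymmetric computation. Third and last, I would integrate out the remaining variables using the normalization of $q$ on $V_2$, which shows that the conditional density is exactly the two-point law with weight $\widetilde W_{N\delta}$.
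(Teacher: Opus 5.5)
Your linear algebra is right: the Schur complement $\check H=(G^{V_2,V_2})^{-1}$ has off-diagonal entry $-\widetilde W_{N\delta}$, $\beta_{V_2}\mapsto\check\beta_{V_2}$ is a unit-Jacobian translation, the indicator and $\det H_\beta$ factorize, and $d\widetilde W_{N\delta}/dW_{N\delta}=1$. The gap is at the exponent, which is the heart of the proof.

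You required $W_{N\delta}-\beta_N-\beta_\delta$ to equal $\widetilde W_{N\delta}-\check\beta_N-\check\beta_\delta$ exactly. Your own criterion only needs equality up to a function of $\beta_{V_1}$. The discrepancy is $-\frac12\langle 1_{V_2},H_\beta^{V_2,V_1}(H_\beta^{V_1,V_1})^{-1}H_\beta^{V_1,V_2}1_{V_2}\rangle$. It is a quadratic form in the couplings, as you say, but its coefficients depend only on $\beta_{V_1}$ and on weights other than $W_{N\delta}$, so it is absorbed. Completing the square makes this clean:
\begin{equation*}
  \langle 1,H_\beta 1\rangle=\langle 1_{V_2},\check H 1_{V_2}\rangle+\langle u,H_\beta^{V_1,V_1}u\rangle,\qquad u=1_{V_1}+(H_\beta^{V_1,V_1})^{-1}H_\beta^{V_1,V_2}1_{V_2},
\end{equation*}
with $\langle 1_{V_2},\check H1_{V_2}\rangle=2\check\beta_N+2\check\beta_\delta-2\widetilde W_{N\delta}$. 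Hence, given $\beta_{V_1}$, the density of $\check\beta_{V_2}$ is proportional to $\mathbf 1_{\{\check H>0\}}e^{\widetilde W_{N\delta}-\check\beta_N-\check\beta_\delta}(\det\check H)^{-1/2}$. Since $q^{\widetilde W_{N\delta},1}_{\{N,\delta\}}$ is already normalized, the conditional law is exactly that measure. The same factorization shows that the law of $\beta_{V_1}$ does not depend on $W_{N\delta}$, which the appendix uses afterwards.

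So your ``main obstacle'' does not exist; it comes from two misreadings.
\begin{itemize}
\item The superscript $1$ is not a boundary term $\eta\equiv 1$. The density you wrote, like the two-point formula preceding the proposition, is $e^{-\frac12\langle 1,H_\beta 1\rangle}$ with no $\eta$-terms, i.e.\ the $\eta\equiv 0$ member of the $\eta$-family.
\item You conflated the marginal of $\beta_{V_2}$, which does acquire the boundary term $W_{V_2,V_1}1_{V_1}$, with the conditional law given $\beta_{V_1}$. The latter has boundary term $\eta_{V_2}+W_{V_2,V_1}(H_\beta^{V_1,V_1})^{-1}\eta_{V_1}$, which vanishes when $\eta\equiv 0$.
\end{itemize}
Your fallbacks are unproved and would change the statement: restating the result with $q^{\widetilde W,\check\eta}$, claiming $G(N,\delta)/G(\delta,\delta)$ is insensitive to $\eta$, and extending the one-point inequality to that family. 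Your final step also asserts the very conclusion you had just called false.

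The paper gives no computation here; it simply invokes \cite[Lemma~5]{MR3904155}. With the completing-the-square identity above, your direct route becomes a short self-contained proof.
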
  

Observe that $G(\delta,\delta)=G^{V_2, V_2}(\delta,\delta)$ and $G(N,\delta)=G^{V_2, V_2}(N,\delta)$. Let $p_1, \dots, p_N, p_{\delta}$ be real numbers. \cite[Theorem~3]{MR3729620} provides the identity in law
\begin{equation}\label{eq:ratio}
    \sum_{j\in V}p_je^{t_j}=\frac{\sum_{j\in V} p_j G(j,\delta)}{G(\delta,\delta)}.
\end{equation}
Decompose the numerator sum over the subsets $V_1$ and $V_2$:
\begin{equation*}
    \sum_{j\in V} p_j G(j,\delta) = \sum_{j \in V_1} p_j G(j,\delta) + p_{N} G^{V_2, V_2}(N,\delta)+p_{\delta}G^{V_2, V_2}(\delta,\delta).
\end{equation*}
Invert the block matrix using the Schur complement formula. This yields $G^{V_1, V_2} = M G^{V_2, V_2}$ with $M = -(H_\beta^{V_1, V_1})^{-1} H_\beta^{V_1, V_2}$. The columns of $M$ are indexed by the boundary vertices $\{N,\delta\}$. Notice that $G(j,\delta)=[M G^{V_2, V_2}]_{j\delta}$ and we have
\begin{equation*}
    \sum_{j \in V_1} p_j G(j,\delta)=\sum_{j \in V_1} p_j \left(M_{jN} G^{V_2, V_2}(N,\delta) + M_{j\delta} G^{V_2, V_2}(\delta,\delta)\right).
\end{equation*}
Define \(a_N = \sum_{j \in V_1} p_j M_{jN}\) and \(a_\delta = \sum_{j \in V_1} p_j M_{j\delta}\). Grouping the terms by \(G^{V_2,V_2}\) components,
\begin{equation*}
    \sum_{j\in V} p_j G(j,\delta)=(a_N + p_{N}) G^{V_2,V_2}(N,\delta) + (a_\delta + p_\delta) G^{V_2,V_2}(\delta,\delta).
\end{equation*}
We now define the shifted parameters \(\tilde{p}_N = a_N + p_{N}\) and \(\tilde{p}_\delta = a_\delta + p_\delta\). The original ratio~\eqref{eq:ratio} reduces to a $2$-point boundary form:
\begin{equation*}
    \frac{\sum_{j\in V} p_j G(j,\delta)}{G(\delta,\delta)} = \tilde{p}_\delta + \tilde{p}_N \frac{G^{V_2,V_2}(N,\delta)}{G^{V_2,V_2}(\delta,\delta)}.
\end{equation*}
This algebraic identity constitutes the core of~\cite[Lemma 2.2.2]{MR4721024}. Moreover, \(a_{N},a_{\delta}\) depend only on \(\beta_{1} ,\ldots , \beta_{N-1}\), since \(M\) is entirely determined by \(H_{\beta}^{V_1,V_1}\) and \(H_{\beta}^{V_1,V_2}\).

Finally, Proposition~\ref{prop:reduction} yields that \((G^{V_2,V_2})^{-1}\) follows the law $q_{\{N,\delta\}}^{\widetilde{W}_{N\delta},1}$ associated with the effective edge weight $\widetilde{W}_{N\delta}$ satisfying $\frac{d \widetilde{W}_{N\delta}}{d W_{N\delta}} = 1$. Apply the $2$-point inequality~\eqref{eq:2pt_monotonicity},
\begin{equation*}
    \frac{d}{dW_{N\delta}} \mathbb{E}_{q_{V}^{W,1}}\left[ g\left(\tilde{p}_\delta + \tilde{p}_N \frac{G^{V_2, V_2}(N,\delta)}{G^{V_2, V_2}(\delta,\delta)}\right) \;\middle|\; \beta_{1},\ldots,\beta_{N-1} \right]  \leq 0.
\end{equation*}
Taking the expectation of this conditional inequality over the bulk variables \(\beta_1, \dots, \beta_{N-1}\) (which do not depend on the edge weight $W_{N\delta}$) yields the monotonicity theorem of the \(H^{2|2}\) model on the graph $V=\{1,\dots,N,\delta\}$.

\section{Perspective function of a jointly convex function}\label{subse:perspective_function_of_a_jointly_convex_function}
Consider a jointly convex function $F:\mathbb{R}^{N}\to\mathbb{R}$. This is equivalent to saying that
\begin{equation*}
    \forall \bm{x},\bm{y}\in\mathbb{R}^{N},\forall \lambda\in(0,1),\quad F(\lambda\bm{x}+(1-\lambda)\bm{y})\leq \lambda F(\bm{x})+(1-\lambda)F(\bm{y}).
\end{equation*}
Define its perspective function $P(x_1,\dots,x_N,T)=T\cdot F(\frac{x_1}{T},\dots,\frac{x_N}{T})$ on $\mathbb{R}^{N}\times\mathbb{R}_{>0}$. To show that $P$ is jointly convex, for all $\bm{x},\bm{y}\in\mathbb{R}^{N}$, $T_x,T_y>0$ and $\lambda\in(0,1)$, define
\begin{equation*}
    \theta=\frac{\lambda T_x}{\lambda T_x+(1-\lambda)T_y}\in(0,1)
\end{equation*}
and check that
\begin{equation*}
\begin{split}
    P\left(\lambda(\bm{x},T_x)+(1-\lambda)(\bm{y},T_y)\right)&=(\lambda T_x+(1-\lambda)T_y) F\left(\frac{\lambda\bm{x}+(1-\lambda)\bm{y}}{\lambda T_x+(1-\lambda)T_y}\right)\\
    &\leq(\lambda T_x+(1-\lambda)T_y)\left(\theta F\left(\frac{\bm{x}}{T_x}\right)+(1-\theta)F\left(\frac{\bm{y}}{T_y}\right)\right)\\
    &=\lambda T_x F\left(\frac{\bm{x}}{T_x}\right)+(1-\lambda) T_y F\left(\frac{\bm{y}}{T_y}\right)=\lambda P(\bm{x},T_x)+(1-\lambda)P(\bm{y},T_y).
\end{split}
\end{equation*}

\bigskip

\subsection*{Conflict of interest}
The authors have no relevant financial or non-financial interests to disclose.

\subsection*{Data availability}
Data sharing is not applicable to this article as no new data were created or analyzed in this work.

\bibliographystyle{alpha}

\end{document}